\documentclass[leqno,english]{amsart}

\usepackage{hyperref}   
\usepackage[square,numbers]{natbib}     
\usepackage{amsthm}     
\usepackage{amsmath}    
\usepackage{amssymb}    
\usepackage{amsxtra}    
\usepackage{eucal}      
\usepackage[bbgreekl]{mathbbol}   
\usepackage[all]{xy}    

\setcounter{tocdepth}{1}


\theoremstyle{plain}

\newtheorem{thm}{Theorem}

\swapnumbers                        
\newtheorem{prop}[subsection]{Proposition}
\newtheorem{lemm}[subsection]{Lemma}

\theoremstyle{definition}

\newtheorem{constr}[subsection]{Construction}
\newtheorem{rem}[subsection]{Reminders}


\numberwithin{subsubsection}{section}

\numberwithin{equation}{subsubsection}

\title[The cotriple resolution of differential graded algebras]{The cotriple resolution\\
of differential graded algebras}

\date{February 4, 2016}

\author{Benoit Fresse}
\address{Univ. Lille, CNRS, UMR 8524 - Laboratoire Paul Painlev\'e, F-59000 Lille, France}
\email{Benoit.Fresse@math.univ-lille1.fr}

\thanks{This research is supported in part by grant ANR-11-BS01-002 ``HOGT'' and by Labex ANR-11-LABX-0007-01 ``CEMPI''.
I am grateful to Paul Goerss, who introduced me to the homotopical constructions studied in this paper during my first visit
at Northwestern University a long time ago. I thank Victor Turchin and Thomas Willwacher who provided me the motivation
to write down this article. I thank the referee for instructive comments on analogues of the result
of this article in the field of stable homotopy theory}

\subjclass{Primary: 18D50; Secondary: 18G55, 18G30}



\DeclareMathOperator{\kk}{\mathbb{k}}   
\DeclareMathOperator{\NN}{\mathbb{N}}   
\DeclareMathOperator{\ZZ}{\mathbb{Z}}   


\DeclareMathOperator{\ACat}{\mathcal{A}}            
\DeclareMathOperator{\FCat}{\mathcal{F}}            
\DeclareMathOperator{\MCat}{\mathcal{M}}            
\DeclareMathOperator{\Simp}{\mathit{s}\mathcal{S}\mathit{et}}   


\DeclareMathOperator{\dg}{\mathit{dg}}      
\DeclareMathOperator{\cosimp}{\mathit{c}}   
\DeclareMathOperator{\simp}{\mathit{s}}     


\DeclareMathOperator{\Map}{\mathtt{Map}}    
\DeclareMathOperator{\Mor}{\mathtt{Mor}}    

\DeclareMathOperator{\Id}{\mathit{Id}}      
\DeclareMathOperator{\id}{\mathit{id}}      


\DeclareMathOperator{\II}{\mathbb{I}}   
\DeclareMathOperator{\SSym}{\mathbb{S}}     
\DeclareMathOperator{\ESym}{\mathbb{E}}     


\DeclareMathOperator{\DGC}{\mathtt{C}}          
\DeclareMathOperator{\DGE}{\mathtt{E}}          
\DeclareMathOperator{\DGH}{\mathtt{H}}          
\DeclareMathOperator{\DGL}{\mathtt{L}}          
\DeclareMathOperator{\DGN}{\mathtt{N}}          

\DeclareMathOperator{\Tot}{\mathtt{Tot}}        
\DeclareMathOperator{\Res}{\mathtt{Res}}        
\DeclareMathOperator{\sk}{\mathtt{sk}}          


\DeclareMathAlphabet{\mathsfit}{OT1}{cmss}{m}{sl}   
\DeclareMathOperator{\EOp}{\mathsfit{E}}        
\DeclareMathOperator{\IOp}{\mathsfit{I}}        
\DeclareMathOperator{\MOp}{\mathsfit{M}}        
\DeclareMathOperator{\NOp}{\mathsfit{N}}        
\DeclareMathOperator{\POp}{\mathsfit{P}}        
\DeclareMathOperator{\ROp}{\mathsfit{R}}        
\DeclareMathOperator{\ComOp}{\mathsfit{Com}}    



\begin{document}

\begin{abstract}
We consider the cotriple resolution of algebras over operads in differential graded modules.
We focus, to be more precise, on the example of algebras over the differential graded Barratt-Eccles operad,
and on the example of commutative algebras.
We prove that the geometric realization of the cotriple resolution (in the sense of model categories)
gives a cofibrant resolution functor on these categories
of differential graded algebras.
\end{abstract}

\maketitle

\section*{Introduction}

The purpose of this paper is to explain the definition of cofibrant resolutions of differential graded algebras
from the classical cotriple resolution.
We consider, to be more precise, the particular case of algebras over an $E_{\infty}$-operad and the case of commutative algebras.
We deal with algebras in differential graded modules in both cases.
We use the generic notation $\POp$ for an operad that governs one of these categories of algebras.
We also use the notation of this operad $\POp$ to specify the category of dg-algebras in which we form our constructions.
We set $\POp = \EOp$ in the $E_{\infty}$-case, because we adopt this notation $\EOp$ for a certain model
of an $E_{\infty}$-operad (the Barratt-Eccles operad actually),
and we set $\POp = \ComOp$ in the commutative algebra case.
We also use the prefix dg to abbreviate the expression ``differential graded''.
We are mostly motivated by the applications of our constructions to dg-algebras over the operads $\POp = \ComOp,\EOp$,
because these categories of dg-algebras define models
for the homotopy of spaces (rationally or completed at a prime depending on the context).
We mostly refer to~\cite{Mandell,Sullivan} for these applications of dg-algebras in homotopy theory.

Let $A$ be an object in any of our categories of dg-algebras.
The cotriple resolution of $A$ is a simplicial object $R_{\bullet} = \Res^{\POp}_{\bullet}(A)$,
canonically associated to $A$, and which forms a free algebra dimension-wise.
The cotriple resolution is also endowed with a natural augmentation $\epsilon: \Res^{\POp}_{\bullet}(A)\rightarrow A$,
where we identify the dg-algebra $A$ with a constant simplicial object.

We use a general definition of the geometric realization, which makes sense in the context of model categories,
to retrieve an object of our category of dg-algebras $|R_{\bullet}|$
from this simplicial dg-algebra $R_{\bullet} = \Res^{\POp}_{\bullet}(A)$.
We just use that our categories of dg-algebras inherit a model structure
in order to perform this construction.
The general theory of model categories implies that $R = |\Res^{\POp}_{\bullet}(A)|$ is cofibrant as an object of our category of dg-algebras,
but does not give any information on the homotopy type of this object.
The actual goal of this paper is to prove that, in the context of our categories of dg-algebras, the augmentation
of the cotriple resolution $\epsilon: \Res^{\POp}_{\bullet}(A)\rightarrow A$
induces a weak-equivalence when we pass to the geometric realization:
\begin{equation*}
\epsilon: |\Res^{\POp}_{\bullet}(A)|\xrightarrow{\sim} A.
\end{equation*}
We conclude from this result that the dg-algebra $R = |\Res^{\POp}_{\bullet}(A)|$ defines a cofibrant resolution of our initial object~$A$.

Let us mention that we have an analogous statement in the context of algebras over operads in spectra (see~\cite[\S VII.3.3]{ElmendorfAl},
\cite[Proposition 6.11]{HarperHess}). The proofs of this topological analogue of our result in the cited references
crucially rely on the observation that the category of algebras over operads in spectra
is cotensored over simplicial sets with function objects
inherited from the category of spectra.
We do not have such a result for the categories of algebras over operads in dg-modules.
We therefore have to use a new approach in order to establish our statement in the dg-module setting.

\medskip
The geometric realization and the cotriple resolution are universal and are endowed with good categorical properties
which make them particularly useful.
To cite one significant application, we consider function spaces $\Map_{\POp}(-,-)$, $\POp = \ComOp,\EOp$,
which we associate to our model categories
of dg-algebras.
We have the interchange formula
$\Map_{\POp}(|\Res^{\POp}_{\bullet}(A)|,B) = \Tot\Map_{\POp}(\Res^{\POp}_{\bullet}(A),B)$,
where $\Tot(-)$ denotes the Bousfield-Kan totalization functor on the category of simplicial sets~\cite[\S X.3]{BousfieldKan}.
The Bousfield-Kan spectral sequence gives a general method to compute the homotopy of this total space
from the cohomotopy of the cosimplicial set $\pi_*\Map_{\POp}(\Res^{\POp}_{\bullet}(A),B)$,
which in turn can be identified with a cosimplicial deformation complex
associated to our objects.
The equivalence $|\Res^{\POp}_{\bullet}(A)|\xrightarrow{\sim}A$ implies that this spectral sequence
computes the homotopy of the function space $\Map_{\POp}(A,B)$
with our initial object $A$
as source.
This statement has to be confronted with the results of~\cite{BousfieldResolutions}, which provide an interpretation, in terms of completions,
of the outcome of homotopy spectral sequences associated to the (co)triple resolution
in general model categories. Our statement basically implies that this completion functor reduces to the identity in the contex of our categories
of dg-algebras.

In the case of commutative algebras, we may use this homotopy spectral sequence
to give an alternate proof of the main result
of~\cite{BousfieldPetersonSmith}, which gives an interpretation of function spaces $\Map_{\Simp}(X,Y)$
of simplicial sets $X,Y\in\Simp$
in terms of division functors on commutative algebras. In the case of algebras over an $E_{\infty}$-operad,
our spectral sequence represents an algebraic counterpart
of the classical unstable Adams spectral sequence,
and we have a generalization of the result of~\cite{BousfieldPetersonSmith}
in this context (see~\cite{ChataurKuribayashi})
which may again be proved by relying on our constructions.

\medskip
This paper is organized as follows. We address the case of algebras over an $E_{\infty}$-operad first.
We have to fix a model for the class of $E_{\infty}$-operads.
We take the differential graded Barratt-Eccles operad (we refer to~\cite{BergerFresse} for detailed study of this operad
in the differential graded context).
We actually deal with non-unitary algebras first. We therefore consider a non-unitary version of the Barratt-Eccles operad,
which we denote by $\EOp$, and which has a zero module as arity zero term $\EOp(0) = 0$.
We devote the first section of this paper to the definition of a cosimplicial framing
on the category of algebras over this operad.
We explain the definition of the cofibrant resolutions from the cotriple resolution, and we give the proof of our main result
in the context of algebras over the Barratt-Eccles operad in the second section.
We address the extension of our results in the context of algebras over a unitary version of the Barratt-Eccles operad
in the third section. We study the cotriple resolution of commutative algebras
in the fourth section, and we outline a generalization of our results for algebras
over arbitrary operads in a concluding section.

\section{Simplicial framings and cosimplicial framings}\label{Background}

We work in the base category of dg-modules over a fixed ground ring $\kk$, where a dg-module is a module $K$
equipped with a decomposition $K = \oplus_{n\in\ZZ}K_n$
and a differential $\delta: K\rightarrow K$
that lowers degrees by one.
We equip this category, which we denote by $\dg\kk$, with its standard model structure,
where the weak-equivalences are the morphisms which induce an isomorphism
in homology, and the fibrations are the degree-wise surjective maps (see for instance~\cite[\S 2.3]{Hovey}
or~\cite[\S 11.1.9]{OperadModules} and~\cite[\S II.5]{FresseBook}).
We also provide $\dg\kk$ with its usual symmetric monoidal category structure, where the symmetry isomorphism
involves a sign that reflects the standard commutation rules
of differential graded algebra (see for instance~\cite[\S II.5]{FresseBook}).
We assume for simplicity that the ground ring $\kk$ is a field
all through this paper, so that all degree-wise
injective morphisms of dg-modules
form a cofibration in this base model category (see the general characterization of cofibrant objects
in categories of dg-modules in~\cite[Proposition 2.3.9]{Hovey},
or adapt the arguments of~\cite[Proposition II.5.1.10]{FresseBook}).
We refer to the book~\cite{Hovey} and to~\cite[\S II.1-4]{FresseBook} for an overall survey of the concepts
of the theory of model categories which we use in this paper.
We also refer to the books~\cite{OperadModules,FresseBook} for an account of the applications
of these concepts in the context of operads.

We use the notation ${}_{\EOp}\dg\kk$ for the category of $\EOp$-algebras in the category of dg-modules $\dg\kk$.
We also adopt the notation $\ESym: \dg\kk\rightarrow{}_{\EOp}\dg\kk$
for the free $\EOp$-algebra functor on $\dg\kk$
which represents the left adjoint of the obvious forgetful functor $\omega: {}_{\EOp}\dg\kk\rightarrow\dg\kk$.
We then consider the full subcategory $\FCat({}_{\EOp}\dg\kk)\subset{}_{\EOp}\dg\kk$
whose objects are the free $\EOp$-algebras $F = \ESym(K)$, $K\in\dg\kk$.

We equip ${}_{\EOp}\dg\kk$ with its standard model structure which we determine from the base category of dg-modules
by assuming that the forgetful functor $\omega: {}_{\EOp}\dg\kk\rightarrow\dg\kk$
creates weak-equivalences and fibrations.
We aim to define a simplicial framing functor on the category ${}_{\EOp}\dg\kk$
and a cosimplicial framing functor $-\otimes\Delta^{\bullet}$
on the subcategory $\FCat({}_{\EOp}\dg\kk)\subset{}_{\EOp}\dg\kk$.
We consider, for this purpose, the normalized chain complex $\DGN_*(\Delta^n)$ of the simplices $\Delta^n\in\Simp$, $n\in\NN$.
We use that these dg-modules $\DGN_*(\Delta^n)$ form coalgebras over the Barratt-Eccles operad $\EOp$ (see~\cite{BergerFresse}).
We dually get that the conormalized cochain complexes of the simplices $\DGN^*(\Delta^n)$
form $\EOp$-algebras.

We then use the observation that the Barratt-Eccles operad is equipped with a Hopf operad structure
to obtain that the tensor product $B^{\Delta^n} = B\otimes\DGN^*(\Delta^n)$,
where $B$ is any algebra over this operad $B\in{}_{\EOp}\dg\kk$,
inherits a natural $\EOp$-algebra structure,
for each $n\in\NN$.
We moreover have the following statement:

\begin{prop}\label{Background:SimplicialFraming}
Let $B\in{}_{\EOp}\dg\kk$. The tensor products $B^{\Delta^n} = B\otimes\DGN^*(\Delta^n)$, $n\in\NN$,
define a simplicial frame of the object $B$ in the category of $\EOp$-algebras ${}_{\EOp}\dg\kk$.
\end{prop}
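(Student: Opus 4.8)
The plan is to verify the three defining properties of a simplicial frame for the simplicial object $B^{\Delta^{\bullet}}$ (we follow the standard terminology of the theory of model categories, cf.~\cite{Hovey}): that its term in simplicial degree~$0$ is~$B$, that the canonical augmentation from the constant simplicial object $cB\rightarrow B^{\Delta^{\bullet}}$ is a level-wise weak-equivalence, and that $B^{\Delta^{\bullet}}$ is Reedy fibrant in ${}_{\EOp}\dg\kk$. The whole verification reduces to a statement about the underlying dg-modules. Indeed, the forgetful functor $\omega: {}_{\EOp}\dg\kk\rightarrow\dg\kk$ creates weak-equivalences and fibrations by the definition of our model structure, and, being a right adjoint, it preserves all limits and hence commutes with the formation of matching objects. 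Since the underlying dg-module of $B^{\Delta^n}$ is $B\otimes\DGN^*(\Delta^n)$, all three conditions may be tested after applying $\omega$, on the simplicial dg-module $B\otimes\DGN^*(\Delta^{\bullet})$.

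The term in degree~$0$ is $B^{\Delta^0} = B\otimes\DGN^*(\Delta^0) = B\otimes\kk = B$, since $\Delta^0$ is a point, and the degree-$0$ component of the augmentation is the identity of $B$. For the level-wise weak-equivalence, I would use that the augmentation $cB\rightarrow B^{\Delta^{\bullet}}$ is induced, in each simplicial degree, by the unit morphism $\kk\rightarrow\DGN^*(\Delta^n)$ dual to the coalgebra counit $\DGN_*(\Delta^n)\rightarrow\kk$ (equivalently, induced by the collapse $\Delta^n\rightarrow\Delta^0$). Since $\Delta^n$ is contractible, this unit is a quasi-isomorphism, and as $\kk$ is a field the functor $B\otimes_{\kk}(-)$ is exact, so $B = B\otimes\kk\rightarrow B\otimes\DGN^*(\Delta^n)$ remains a quasi-isomorphism by the K\"unneth formula. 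This gives the required weak-equivalence $B\xrightarrow{\sim}B^{\Delta^n}$ for every $n\in\NN$.

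The substance of the proposition is the Reedy fibrancy, that is, the surjectivity of each matching map $B^{\Delta^n}\rightarrow M_n(B^{\Delta^{\bullet}})$. Here the field hypothesis enters decisively: the $n$-th matching object is a finite limit, and because $B\otimes_{\kk}(-)$ is exact it commutes with this limit, so that, as dg-modules, $M_n(B^{\Delta^{\bullet}}) = B\otimes M_n(\DGN^*(\Delta^{\bullet}))$ and the matching map equals $\id_B\otimes\mu_n$, where $\mu_n$ denotes the matching map of $\DGN^*(\Delta^{\bullet})$. Since $B\otimes_{\kk}(-)$ preserves surjections, it therefore suffices to prove that the simplicial dg-module $\DGN^*(\Delta^{\bullet})$ is itself Reedy fibrant. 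I would deduce this by duality from the Reedy cofibrancy of the cosimplicial dg-module $\DGN_*(\Delta^{\bullet})$: the cosimplicial simplicial set $\Delta^{\bullet}$ is Reedy cofibrant with latching morphisms the boundary inclusions $\partial\Delta^n\hookrightarrow\Delta^n$, and the normalized chain functor $\DGN_*$ preserves colimits and monomorphisms, so $\DGN_*(\Delta^{\bullet})$ is Reedy cofibrant with latching morphisms the split injections $\DGN_*(\partial\Delta^n)\hookrightarrow\DGN_*(\Delta^n)$. Applying the exact $\kk$-linear duality functor, which carries $\DGN_*$ to $\DGN^*$, turns colimits into limits and injections into surjections; it thereby identifies $M_n(\DGN^*(\Delta^{\bullet}))$ with $\DGN^*(\partial\Delta^n)$ and the matching map $\mu_n$ with the restriction $\DGN^*(\Delta^n)\twoheadrightarrow\DGN^*(\partial\Delta^n)$, which is surjective.

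The main obstacle is precisely the interchange, in the previous paragraph, between the matching objects of the $\EOp$-algebras $B^{\Delta^n}$ and the tensor factor~$B$: a priori the matching object lives in ${}_{\EOp}\dg\kk$ and carries an algebra structure with no evident relation to $B\otimes(-)$. What makes the argument go through is that both fibrations and limits in ${}_{\EOp}\dg\kk$ are detected by $\omega$, so that only the underlying dg-module matching map needs to be surjective, together with the exactness of $B\otimes_{\kk}(-)$ over the field $\kk$, which is what permits pulling the tensor factor~$B$ out of the matching-object limit. I expect the only remaining care to be the bookkeeping identifying $M_n(\DGN^*(\Delta^{\bullet}))$ with $\DGN^*(\partial\Delta^n)$ and checking that $\mu_n$ is the boundary restriction, which is routine once the duality with the Reedy-cofibrant cosimplicial object $\DGN_*(\Delta^{\bullet})$ is set up.
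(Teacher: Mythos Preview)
Your argument is correct and follows the same approach as the paper: both reduce the verification to the category of dg-modules by observing that the forgetful functor $\omega$ creates limits, fibrations, and weak-equivalences, and then appeal to the fact that $B\otimes\DGN^*(\Delta^{\bullet})$ is a simplicial frame in $\dg\kk$. The paper simply cites this last fact as well known (referring to~\cite[\S II.6]{FresseBook}), whereas you spell out the verification of the three defining properties in detail; your unpacking is accurate, including the use of the field hypothesis to commute $B\otimes_{\kk}(-)$ past the finite-limit matching object.
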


\begin{proof}
We are left to proving that the tensor products $B^{\Delta^n} = B\otimes\DGN^*(\Delta^n)$, $n\in\NN$, fulfill the properties of a simplicial frame
in the category of dg-modules (since the forgetful functor $\omega: {}_{\EOp}\dg\kk\rightarrow\dg\kk$
creates limits, fibrations, and weak-equivalences), but this statement is well known in this setting (see for instance~\cite[\S II.6]{FresseBook}).
\end{proof}

We now construct a left adjoint of the functor $B^{\Delta^n} = B\otimes\DGN^*(\Delta^n)$, $n\in\NN$,
to define our cosimplicial framing functor on the category of free $\EOp$-algebras.

\begin{constr}[The cosimplicial framing]\label{Background:CosimplicialFramingConstruction}
For an object $F = \ESym(K)$ of our category $\FCat({}_{\EOp}\dg\kk)$, and for any $n\in\NN$, we basically set:
\begin{equation*}
\ESym(K)\otimes\Delta^n := \ESym(K\otimes\DGN_*(\Delta^n)),
\end{equation*}
where we take the free $\EOp$-algebra on the object $K\otimes\DGN_*(\Delta^n)\in\dg\kk$.

Then we consider a morphism $\phi = \phi_f: \ESym(K)\rightarrow\ESym(L)$ which we determine from a morphism
of the ambient category $f: K\rightarrow\ESym(L)$,
for any $K,L\in\dg\kk$, by using the definition of free algebras.
Recall that the free algebra $\ESym(M)$
associated to any object $M\in\dg\kk$
has an expansion of the form $\ESym(M) = \bigoplus_{r=1}^{\infty}(\EOp(r)\otimes M^{\otimes r})_{\Sigma_r}$,
where, for each $r\in\NN$, we consider the coinvariants of the tensor product $\EOp(r)\otimes M^{\otimes r}\in\dg\kk$
under the diagonal action of the symmetric group on $r$ letters $\Sigma_r$,
and we use the notation $\oplus$ for the coproduct in the category $\dg\kk$.

For each $r\in\NN$, we consider the morphism $f: K\rightarrow(\EOp(r)\otimes L^{\otimes r})_{\Sigma_r}$,
which defines the component of weight $r$ of $f: K\rightarrow\ESym(L)$,
and we form the composite
\begin{multline*}
K\otimes\DGN_*(\Delta^n)\xrightarrow{f\otimes\id}(\EOp(r)\otimes L^{\otimes r})_{\Sigma_r}\otimes\DGN_*(\Delta^n)
\\
\begin{aligned}
\simeq & (\EOp(r)\otimes L^{\otimes r}\otimes\DGN_*(\Delta^n))_{\Sigma_r}
\xrightarrow{\Delta_*}(\EOp(r)\otimes\EOp(r)\otimes L^{\otimes r}\otimes\DGN_*(\Delta^n))_{\Sigma_r}
\\
\simeq & (\EOp(r)\otimes L^{\otimes r}\otimes\EOp(r)\otimes\DGN_*(\Delta^n))_{\Sigma_r}
\xrightarrow{\rho_*}(\EOp(r)\otimes L^{\otimes r}\otimes\DGN_*(\Delta^n)^{\otimes r})_{\Sigma_r}
\\
\simeq & (\EOp(r)\otimes(L\otimes\DGN_*(\Delta^n))^{\otimes r})_{\Sigma_r},
\end{aligned}
\end{multline*}
where $\Delta_*$ is the morphism induced by the coproduct of the Barratt-Eccles operad $\Delta: \EOp(r)\rightarrow\EOp(r)\otimes\EOp(r)$,
and $\rho_*$ is the morphism yielded by the coaction of the Barratt-Eccles operad
on the normalized complex of the simplex $\rho: \EOp(r)\otimes\DGN_*(\Delta^n)\rightarrow\DGN_*(\Delta^n)^{\otimes r}$.
We consider the morphism
\begin{equation*}
f^{\sharp}: K\otimes\DGN_*(\Delta^n)\rightarrow\ESym(L\otimes\DGN_*(\Delta^n))
\end{equation*}
defined by these composites on the components of the free $\EOp$-algebra $\ESym(L\otimes\DGN_*(\Delta^n))$.
We now set $\phi_f\otimes\Delta^n := \phi_{f^{\sharp}}$
\begin{equation*}
\phi_f\otimes\Delta^n := \phi_{f^{\sharp}}: \ESym(K\otimes\DGN_*(\Delta^n))\rightarrow\ESym(L\otimes\DGN_*(\Delta^n)),
\end{equation*}
where we consider the morphism of free $\EOp$-algebras $\phi_{f^{\sharp}}$
which we associate to this morphism $f^{\sharp}$
in the base category category~$\dg\kk$.
\end{constr}

We have the following claim:

\begin{prop}\label{Background:CosimplicialFramingAdjunction}
The functor of the previous paragraph $-\otimes\Delta^n: \FCat({}_{\EOp}\dg\kk)\rightarrow\FCat({}_{\EOp}\dg\kk)$
is left adjoint, on the subcategory $\FCat({}_{\EOp}\dg\kk)\subset{}_{\EOp}\dg\kk$,
to the functor $(-)^{\Delta^n}: {}_{\EOp}\dg\kk\rightarrow{}_{\EOp}\dg\kk$
of Proposition~\ref{Background:SimplicialFraming},
for any $n\in\NN$.
\end{prop}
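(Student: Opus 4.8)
The plan is to realize the asserted adjunction as a composite of three elementary natural bijections, and then to check that the morphism-level action prescribed in Construction~\ref{Background:CosimplicialFramingConstruction} is exactly the one forced by this adjunction.

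First I would fix a free algebra $F = \ESym(K)\in\FCat({}_{\EOp}\dg\kk)$ and an arbitrary $B\in{}_{\EOp}\dg\kk$, and unwind both hom-sets. Since $F\otimes\Delta^n = \ESym(K\otimes\DGN_*(\Delta^n))$ is free, the free algebra adjunction $\ESym\dashv\omega$ gives
\[
\Mor_{{}_{\EOp}\dg\kk}(F\otimes\Delta^n,B)\cong\Mor_{\dg\kk}(K\otimes\DGN_*(\Delta^n),\omega B).
\]
Because $\kk$ is a field and $\DGN_*(\Delta^n)$ is degree-wise finite dimensional, it is dualizable in $\dg\kk$ with dual $\DGN^*(\Delta^n)$, so the tensor-duality adjunction $(-\otimes\DGN_*(\Delta^n))\dashv(-\otimes\DGN^*(\Delta^n))$ identifies the right-hand side with $\Mor_{\dg\kk}(K,\omega B\otimes\DGN^*(\Delta^n))$. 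Finally I would observe that the underlying dg-module of $B^{\Delta^n} = B\otimes\DGN^*(\Delta^n)$ is precisely $\omega B\otimes\DGN^*(\Delta^n)$, since the Hopf-operad structure only affects the $\EOp$-algebra structure and not the underlying module, so a second use of $\ESym\dashv\omega$ rewrites this as $\Mor_{{}_{\EOp}\dg\kk}(\ESym(K),B^{\Delta^n})$. The composite of these three bijections is the candidate isomorphism $\Phi$, manifestly natural in $B$; hence $F\mapsto F\otimes\Delta^n$ underlies an (abstract) left adjoint of $(-)^{\Delta^n}$ on $\FCat({}_{\EOp}\dg\kk)$.

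The remaining, and genuinely substantial, point is to check that the functoriality of $-\otimes\Delta^n$ given in the Construction coincides with the morphism action determined by $\Phi$. Concretely, for a morphism $\phi_f\colon\ESym(K)\to\ESym(L)$ attached to $f\colon K\to\ESym(L)$, I must verify the identity $\Phi(g\circ\phi_{f^{\sharp}}) = \Phi(g)\circ\phi_f$ for every $g\colon\ESym(L\otimes\DGN_*(\Delta^n))\to B$, where $\phi_f\otimes\Delta^n := \phi_{f^{\sharp}}$ by definition; this equation is precisely the defining property of the abstract adjoint's action, so it pins down $\phi_{f^{\sharp}}$ as that action. Both sides are $\EOp$-algebra maps out of a free algebra, so it suffices to compare their restrictions to the generating module $K$, expanding weight by weight via $\ESym(M)=\bigoplus_r(\EOp(r)\otimes M^{\otimes r})_{\Sigma_r}$. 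The key is that the transpose $\Phi$ carries the $\EOp$-algebra structure of $B^{\Delta^n}$, which combines the actions on $B$ and on $\DGN^*(\Delta^n)$ through the Hopf coproduct $\Delta\colon\EOp(r)\to\EOp(r)\otimes\EOp(r)$ and the dualized coaction on the cochains, into exactly the two operations $\Delta_*$ and $\rho_*$ entering the definition of $f^{\sharp}$. Matching these is the main obstacle: one has to trace $\Delta$ and the coaction $\rho\colon\EOp(r)\otimes\DGN_*(\Delta^n)\to\DGN_*(\Delta^n)^{\otimes r}$ through the duality between $\DGN_*(\Delta^n)$ and $\DGN^*(\Delta^n)$, and see that its evaluation and coevaluation intertwine $\rho_*$ with the $\EOp$-action on $\DGN^*(\Delta^n)$, and $\Delta_*$ with the Hopf diagonal governing $B^{\Delta^n}$.

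Finally I would note that this identification simultaneously yields functoriality of $-\otimes\Delta^n$, since adjoint transposes compose and preserve identities, so that the Construction indeed defines a functor, and naturality in both variables then gives the stated adjunction. I expect the only real work to be the weight-graded bookkeeping above; the coassociativity and counitality of the Barratt-Eccles Hopf structure, together with the fact that $\rho$ is dual to the $\EOp$-algebra structure on $\DGN^*(\Delta^n)$, should make every diagram commute once the $\Sigma_r$-coinvariants are handled with the standard sign conventions.
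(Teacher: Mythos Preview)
Your proposal is correct and follows essentially the same approach as the paper: both establish the object-level bijection as the composite of the free algebra adjunction, the tensor-duality adjunction for the finite-dimensional complex $\DGN_*(\Delta^n)$, and the free algebra adjunction again, and then verify that the morphism action from Construction~\ref{Background:CosimplicialFramingConstruction} is the one dictated by this bijection. The only difference is a minor economy in the morphism-level check: where you propose to verify $\Phi(g\circ\phi_{f^{\sharp}}) = \Phi(g)\circ\phi_f$ for arbitrary $g$, the paper instead writes down the unit $\eta$ of the adjunction explicitly (it is induced by the trace map $\kk\to\DGN_*(\Delta^n)\otimes\DGN^*(\Delta^n)$), checks that $\phi_f$ composed with $\eta$ corresponds under the bijections to $\phi_{f^{\sharp}}$, and invokes the Yoneda lemma---this is your verification specialized to $g=\id$, which suffices and spares some of the bookkeeping you anticipate.
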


\begin{proof}
We have the adjunction relations
\begin{multline*}
\Mor_{{}_{\EOp}\dg\kk}(\ESym(K),B\otimes\DGN^*(\Delta^n))
\xrightarrow[(1)]{\simeq}\Mor_{\dg\kk}(K,B\otimes\DGN^*(\Delta^n))
\\
\xrightarrow[(2)]{\simeq}\Mor_{\dg\kk}(K\otimes\DGN_*(\Delta^n),B)
\xrightarrow[(3)]{\simeq}\Mor_{{}_{\EOp}\dg\kk}(\ESym(K\otimes\DGN_*(\Delta^n)),B)
\end{multline*}
at the object level, for any $K\in\dg\kk$. The morphism $\eta: \ESym(K)\rightarrow\ESym(K\otimes\DGN_*(\Delta^n))\otimes\DGN^*(\Delta^n)$
that defines the unit of this adjunction
relation and corresponds to the identity of the object $B = \ESym(K\otimes\DGN_*(\Delta^n))$
under our bijections (1-3),
is yielded by the map
$K\rightarrow K\otimes\DGN_*(\Delta^n)\otimes\DGN^*(\Delta^n)\subset\ESym(K\otimes\DGN_*(\Delta^n))\otimes\DGN^*(\Delta^n)$,
where we consider the morphism induced by the trace map $T: \kk\rightarrow\DGN_*(\Delta^n)\otimes\DGN^*(\Delta^n)$
on our tensor product,
followed by the morphism induced by the canonical embedding of the dg-module~$K\otimes\DGN_*(\Delta^n)$
in the free $\EOp$-algebra~$\ESym(K\otimes\DGN_*(\Delta^n))$.

Let $\phi = \phi_f: \ESym(K)\rightarrow\ESym(L)$ be a morphism of free $\EOp$-algebras
associated to a morphism $f: K\rightarrow\ESym(L)$
in the base category of dg-modules.
We easily check that the morphism $\phi\eta: \ESym(K)\rightarrow\ESym(L\otimes\DGN_*(\Delta^n))\otimes\DGN^*(\Delta^n)$,
which we form by composing $\phi = \phi_f$ with the unit of our adjunction relation $\eta$,
corresponds under our bijections (1-3)
to the morphism $\phi_{f^{\sharp}}: \ESym(K\otimes\DGN_*(\Delta^n))\rightarrow\ESym(L\otimes\DGN_*(\Delta^n))$
given by the construction of~\S\ref{Background:CosimplicialFramingConstruction},
and we use the Yoneda lemma to conclude that this morphism $\phi_{f^{\sharp}}$
represents the adjoint of our morphism $\phi = \phi_f$.
\end{proof}

The collection of free $\EOp$-algebras $A\otimes\Delta^n = \ESym(K\otimes\DGN_*(\Delta^n))$, $n\in\NN$,
which we associate to any $A = \ESym(K)\in\FCat({}_{\EOp}\dg\kk)$
clearly defines a cosimplicial object $A\otimes\Delta^{\bullet}$
in the category of $\EOp$-algebras ${}_{\EOp}\dg\kk$.
The result of Proposition~\ref{Background:CosimplicialFramingAdjunction}
also implies that the mapping which associates this cosimplicial object $A\otimes\Delta^{\bullet}$
to any $A = \ESym(K)\in\FCat({}_{\EOp}\dg\kk)$
defines a functor on the category $\FCat({}_{\EOp}\dg\kk)\subset{}_{\EOp}\dg\kk$.
We then have the following observation:

\begin{prop}\label{Background:CosimplicialFraming}
Let $A = \ESym(K)\in\FCat({}_{\EOp}\dg\kk)$. The free $\EOp$-algebras $A\otimes\Delta^n = \ESym(K\otimes\DGN_*(\Delta^n))$, $n\in\NN$,
define a cosimplicial frame of the object $A = \ESym(K)$ in the category of $\EOp$-algebras ${}_{\EOp}\dg\kk$.
\end{prop}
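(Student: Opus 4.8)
The plan is to deduce the statement from the corresponding assertion at the level of dg-modules, using that the cosimplicial object $A\otimes\Delta^\bullet$ is obtained by applying the free $\EOp$-algebra functor $\ESym$ levelwise to the cosimplicial dg-module $K\otimes\DGN_*(\Delta^\bullet)$. Indeed, by the very definition of Construction~\ref{Background:CosimplicialFramingConstruction}, in each cosimplicial dimension we have $A\otimes\Delta^n = \ESym(K\otimes\DGN_*(\Delta^n))$, and the coface and codegeneracy operators of $A\otimes\Delta^\bullet$ are the images under $\ESym$ of the operators $\id\otimes\DGN_*(\alpha)$ that the morphisms $\alpha$ of the simplicial category induce on $K\otimes\DGN_*(\Delta^\bullet)$. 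We may therefore write $A\otimes\Delta^\bullet = \ESym(K\otimes\DGN_*(\Delta^\bullet))$ as an identity of cosimplicial objects of ${}_{\EOp}\dg\kk$.

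First I would recall that $K\otimes\DGN_*(\Delta^\bullet)$ forms a cosimplicial frame of $K$ in $\dg\kk$. This is the analogue, dual to the simplicial framing of Proposition~\ref{Background:SimplicialFraming}, of a standard construction established in~\cite[\S II.6]{FresseBook}. Concretely: the relation $\DGN_*(\Delta^0) = \kk$ gives $K\otimes\DGN_*(\Delta^0) = K$ in dimension zero; the contractibility of the simplices implies $\DGN_*(\Delta^n)\xrightarrow{\sim}\kk$, so that the coaugmentation $cK\rightarrow K\otimes\DGN_*(\Delta^\bullet)$ from the constant object is a level-wise weak-equivalence (the object $K\otimes\DGN_*(\Delta^\bullet)$ is homotopically constant); and the latching morphisms $L^n(K\otimes\DGN_*(\Delta^\bullet))\rightarrow K\otimes\DGN_*(\Delta^n)$ are degree-wise injective, being induced by the boundary inclusions $\partial\Delta^n\subset\Delta^n$, hence cofibrations of $\dg\kk$ since $\kk$ is a field, so that $K\otimes\DGN_*(\Delta^\bullet)$ is Reedy cofibrant.

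Then I would check that the free algebra functor $\ESym\colon\dg\kk\rightarrow{}_{\EOp}\dg\kk$ carries this cosimplicial frame to a cosimplicial frame, which gives our claim. The point is that $\ESym$ is a left Quillen functor, since it is left adjoint to the forgetful functor, which creates fibrations and weak-equivalences by the definition of the model structure on ${}_{\EOp}\dg\kk$. As a left adjoint, $\ESym$ preserves all colimits, and in particular commutes with the colimits that define the latching objects; the latching morphism of $\ESym(K\otimes\DGN_*(\Delta^\bullet))$ in dimension $n$ is thus identified with the image under $\ESym$ of the latching morphism of $K\otimes\DGN_*(\Delta^\bullet)$. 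Since $\ESym$ preserves cofibrations, we deduce that $A\otimes\Delta^\bullet$ is Reedy cofibrant. Moreover, because $\kk$ is a field, every object of $\dg\kk$ is cofibrant, so Ken Brown's lemma ensures that the left Quillen functor $\ESym$ preserves all weak-equivalences; hence $\ESym$ takes the homotopically constant object $K\otimes\DGN_*(\Delta^\bullet)$ to a homotopically constant object, and we still have $A\otimes\Delta^0 = \ESym(K) = A$ in dimension zero. These are exactly the defining properties of a cosimplicial frame of $A = \ESym(K)$.

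The main obstacle is the identification of the latching object of $A\otimes\Delta^\bullet$ with the free algebra on the latching object of $K\otimes\DGN_*(\Delta^\bullet)$: one has to make sure that the colimit computing the latching object in ${}_{\EOp}\dg\kk$ is the one transported by $\ESym$ from $\dg\kk$, which relies precisely on the cocontinuity of $\ESym$, and that no Hopf-operad twist interferes with the cosimplicial structure in the simplicial direction (the twist appearing in Construction~\ref{Background:CosimplicialFramingConstruction} concerns only the functoriality in the variable $A$, not the coface and codegeneracy operators in the cosimplicial direction). Once this compatibility is secured, everything reduces to the elementary properties of $\DGN_*(\Delta^\bullet)$ recalled above.
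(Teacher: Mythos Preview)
Your proof is correct, but it follows a different route from the paper's. The paper argues by duality: it invokes the adjunction of Proposition~\ref{Background:CosimplicialFramingAdjunction} between $-\otimes\Delta^n$ and $(-)^{\Delta^n}$, and then appeals to the general model-categorical principle that the left adjoint of a simplicial frame functor is a cosimplicial frame functor, reducing everything to Proposition~\ref{Background:SimplicialFraming}. Your argument instead works directly: you observe that $A\otimes\Delta^\bullet = \ESym(K\otimes\DGN_*(\Delta^\bullet))$ as cosimplicial objects, verify that $K\otimes\DGN_*(\Delta^\bullet)$ is a cosimplicial frame of $K$ in $\dg\kk$, and then push this through the left Quillen functor $\ESym$, using that left adjoints commute with latching objects and that (over a field) $\ESym$ preserves all weak-equivalences by Ken Brown. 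Your approach is more self-contained in that it bypasses the adjunction of Proposition~\ref{Background:CosimplicialFramingAdjunction} entirely; the paper's approach, on the other hand, makes the duality with the simplicial frame explicit, which is conceptually what one wants when later pairing $|{-}|$ with $\Tot$ as in the introduction. Your remark that the Hopf-operad twist in Construction~\ref{Background:CosimplicialFramingConstruction} only affects functoriality in $A$ and not the cosimplicial operators is exactly the point needed to justify the identification $A\otimes\Delta^\bullet = \ESym(K\otimes\DGN_*(\Delta^\bullet))$, and it is correct.
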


\begin{proof}
We use standard model category arguments to get, through the adjunction relation of Proposition~\ref{Background:CosimplicialFramingAdjunction},
an equivalence between the properties of a cosimplicial frame for the objects $A\otimes\Delta^n$, $n\in\NN$,
and the properties of a simplicial frame for the objects $B^{\Delta^n}$, $n\in\NN$.
We then use the statement of Proposition~\ref{Background:SimplicialFraming}
to conclude.
\end{proof}

We may actually use the adjunction relation of Proposition~\ref{Background:CosimplicialFramingAdjunction}
to extend our functors $-\otimes\Delta^n$ to the whole category of $\EOp$-algebras.
We then check that this construction returns a cosimplicial frame $A\otimes\Delta^{\bullet}$
as soon as $A$ is a cofibrant in~${}_{\EOp}\dg\kk$.

\section{The main result}\label{MainResult}
In what follows, we use the notation $\simp\ACat$ (respectively, $\cosimp\ACat$)
for the category of simplicial (respectively, cosimplicial) objects
in any ambient category $\ACat$.

The geometric realization of a (Reedy cofibrant) simplicial object $A_{\bullet}$ in the category ${}_{\EOp}\dg\kk$
is defined by the following coend:
\begin{equation*}
|A_{\bullet}| = \int^{\underline{n}\in\Delta} A_n\otimes\Delta^n,
\end{equation*}
which runs over the simplicial category $\Delta$, and where we take a cosimplicial frame $A_{\bullet}\otimes\Delta^{\bullet}$
of our object~$A_{\bullet}$ in the category~$\simp({}_{\EOp}\dg\kk)$
equipped with the Reedy model structure of simplicial objects in ${}_{\EOp}\dg\kk$.
Let us insist that we form this coend in the category of $\EOp$-algebras ${}_{\EOp}\dg\kk$.
The geometric realization $|A_{\bullet}|$ automatically forms a cofibrant
object in ${}_{\EOp}\dg\kk$ when $A_{\bullet}$
is Reedy cofibrant.

The cotriple resolution $R_{\bullet} = \Res^{\EOp}_{\bullet}(A)$ which we consider in this paper has this feature.
This simplicial object is equipped with an augmentation $\epsilon: \Res^{\EOp}_{\bullet}(A)\rightarrow A$
and the main purpose of this section is to check that this augmentation morphism
induces a weak-equivalence when we pass
to the geometric realization $\epsilon: \Res^{\EOp}_{\bullet}(A)\xrightarrow{\sim} A$.
To get this result, we use the explicit framing of the previous section and the theory of modules over operads.
We give a brief reminder on the definition of the cotriple resolution
before tackling this proof.

\begin{rem}[The cotriple resolution of algebras over the Barratt-Eccles operad]\label{MainResult:CotripleResolution}
The cotriple resolution $\Res^{\EOp}_{\bullet}(A)\in\simp({}_{\EOp}\dg\kk)$
of an $\EOp$-algebra $A\in{}_{\EOp}\dg\kk$
is defined by:
\begin{equation}\tag{$*$}\label{MainResult:CotripleResolution:Components}
\Res^{\EOp}_n(A) = \underbrace{\ESym\circ\cdots\circ\ESym}_{n+1}(A),
\end{equation}
for each dimension $n\in\NN$, where we consider an $n+1$-fold composite of free $\EOp$-algebra
functors (and forgetful functors which we omit to mark for simplicity).
We number the factors of this composite by $0,\dots,n$, from left to right.
We use the free $\EOp$-algebra adjunction $\ESym(-): \dg\kk\rightleftarrows{}_{\EOp}\dg\kk :\omega$
to define the structure morphisms of this simplicial object.
To be explicit, we equip $\Res^{\EOp}_{\bullet}(A)$ with the face morphisms $d_i: \Res^{\EOp}_n(A)\rightarrow\Res^{\EOp}_{n-1}(A)$
given, for $i = 0,\dots,n$, by the application of the augmentation morphism of our adjunction $\epsilon: \ESym(-) = \ESym\omega(-)\rightarrow\Id$
to the $i$th factor of our composite functor~(\ref{MainResult:CotripleResolution:Components}),
while the degeneracy morphisms $s_j: \Res^{\EOp}_n(A)\rightarrow\Res^{\EOp}_{n+1}(A)$,
are given, for $j = 0,\dots,n$, by the insertion of the adjunction unit $\iota: \Id\rightarrow\omega\ESym(-)$
between the $j$th and $j+1$th factors of this composite.

This simplicial object forms a free $\EOp$-algebra dimension-wise. We explicitly have $\Res^{\EOp}_n(A) = \ESym(\DGC^{\EOp}_n(A))$, for any $n\in\NN$,
for a generating dg-module $\DGC^{\EOp}_n(A)\in\dg\kk$
such that:
\begin{equation*}
\DGC^{\EOp}_n(A) = \underbrace{\ESym\circ\cdots\circ\ESym}_{n}(A).
\end{equation*}
We immediately see that the face operators $d_i$ such that $i>0$
are identified with morphisms
of free $\EOp$-algebras $d_i: \ESym(\DGC^{\EOp}_n(A))\rightarrow\ESym(\DGC^{\EOp}_{n-1}(A))$
which we associate to face morphisms of these generating dg-modules $d_i: \DGC^{\EOp}_n(A)\rightarrow\DGC^{\EOp}_{n-1}(A)$.
We have a similar relation for the degeneracy operators $s_j: \Res^{\EOp}_n(A)\rightarrow\Res^{\EOp}_{n+1}(A)$, for all $j$.
The zeroth face $d_0: \Res^{\EOp}_n(A)\rightarrow\Res^{\EOp}_{n-1}(A)$, on the other hand,
is yielded by a morphism $d_0: \ESym(\DGC^{\EOp}_n(A))\rightarrow\ESym(\DGC^{\EOp}_{n-1}(A))$
which does not preserve our generating dg-modules.

We also equip $\Res^{\EOp}_{\bullet}(A)\in\simp({}_{\EOp}\dg\kk)$
with an augmentation
$\Res^{\EOp}_0(A) = \ESym(A)\xrightarrow{\epsilon} A$
which is yielded by the augmentation morphism of the free $\EOp$-algebra adjunction.
\end{rem}

We record the following statement:

\begin{prop}\label{MainResult:ReedyCofibrant}
The simplicial object $\Res^{\EOp}_{\bullet}(A)\in\simp({}_{\EOp}\dg\kk)$
is Reedy cofibrant
as soon as the $\EOp$-algebra $A$
is cofibrant as a dg-module (which is automatically the case since we assume that our ground ring is a field).
\end{prop}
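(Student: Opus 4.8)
The plan is to verify the defining condition of Reedy cofibrancy for the simplicial object $\Res^{\EOp}_{\bullet}(A)$, namely that for every $n\in\NN$ the latching morphism $L_n\Res^{\EOp}_{\bullet}(A)\to\Res^{\EOp}_n(A)$ is a cofibration in ${}_{\EOp}\dg\kk$; recall that this condition automatically forces each term $\Res^{\EOp}_n(A)$ to be cofibrant as well. Here the $n$th latching object is the colimit $L_n\Res^{\EOp}_{\bullet}(A)=\varinjlim_{[n]\twoheadrightarrow[k],\,k<n}\Res^{\EOp}_k(A)$ over the non-identity surjections $[n]\twoheadrightarrow[k]$ of the simplicial category $\Delta$, a diagram whose transition maps are the degeneracy operators $s_j$ only, so that no face operator intervenes in its formation.

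First I would exploit the free structure of the degeneracies recalled in~\S\ref{MainResult:CotripleResolution}. Each degeneracy $s_j\colon\Res^{\EOp}_n(A)\to\Res^{\EOp}_{n+1}(A)$ is the image under the free functor $\ESym$ of a morphism $s_j\colon\DGC^{\EOp}_n(A)\to\DGC^{\EOp}_{n+1}(A)$ of generating dg-modules, induced by insertion of the adjunction unit $\iota\colon\Id\to\omega\ESym$. Since $\ESym$ is a left adjoint it preserves all colimits, and the latching colimit uses precisely these degeneracies; I therefore obtain a natural identification $L_n\Res^{\EOp}_{\bullet}(A)=\ESym(L_n\DGC^{\EOp}_{\bullet}(A))$, under which the latching morphism is identified with $\ESym(\lambda_n)$, where $\lambda_n\colon L_n\DGC^{\EOp}_{\bullet}(A)\to\DGC^{\EOp}_n(A)$ is the latching morphism of the degeneracy diagram formed by the generating dg-modules in $\dg\kk$. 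Because the adjunction $\ESym\colon\dg\kk\rightleftarrows{}_{\EOp}\dg\kk\colon\omega$ is a Quillen adjunction, the functor $\ESym$ carries cofibrations of dg-modules to cofibrations of $\EOp$-algebras, and the problem reduces to checking that $\lambda_n$ is a cofibration in $\dg\kk$, that is, since $\kk$ is a field, a degree-wise injection.

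Then I would establish this injectivity from the observation that the unit $\iota\colon K\to\omega\ESym(K)=\bigoplus_{r\geq 1}(\EOp(r)\otimes K^{\otimes r})_{\Sigma_r}$ is the inclusion of the weight-one summand (recall $\EOp(1)=\kk$), hence a split monomorphism, naturally in $K\in\dg\kk$. As a consequence the degeneracy operators of the generating diagram $\DGC^{\EOp}_{\bullet}(A)$ are split monomorphisms; in fact one has the relation $d_{j+1}s_j=\id$, and here the retracting faces $d_i$ with $i>0$ do preserve the generating dg-modules (by~\S\ref{MainResult:CotripleResolution}), so that the whole decomposition may be carried out inside $\dg\kk$ without ever invoking the exceptional face $d_0$. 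Running the standard Eilenberg--Zilber/Dold--Kan normalization with these split degeneracies yields a natural splitting $\DGC^{\EOp}_n(A)\cong L_n\DGC^{\EOp}_{\bullet}(A)\oplus N_n$, exhibiting $\lambda_n$ as the inclusion of a direct summand, hence as a degree-wise injection; applying $\ESym$ then gives the desired conclusion.

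I expect the \emph{main obstacle} to be precisely this last step: making rigorous that the degenerate part $L_n\DGC^{\EOp}_{\bullet}(A)$ embeds as a genuine direct summand of $\DGC^{\EOp}_n(A)$, equivalently that the latching colimit creates no collapsing identifications. The delicate point is that $\DGC^{\EOp}_{\bullet}(A)$ is not a full simplicial object in $\dg\kk$ (the face $d_0$ fails to preserve generators), so I must check that the classical normalization argument goes through using only the degeneracies together with the faces $d_i$ with $i>0$. Once this free-degeneracy decomposition is in place, the remaining verifications are formal, the resulting map being automatically a cofibration since every degree-wise injection of dg-modules over the field $\kk$ is one.
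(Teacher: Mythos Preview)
Your proposal is correct and follows the same overall strategy as the paper: both identify $L_n\Res^{\EOp}_{\bullet}(A)=\ESym(L_n\DGC^{\EOp}_{\bullet}(A))$ by using that the degeneracies are free on morphisms of generating dg-modules and that $\ESym$ preserves colimits, and both then reduce to showing that the latching map $\lambda_n\colon L_n\DGC^{\EOp}_{\bullet}(A)\to\DGC^{\EOp}_n(A)$ is a split monomorphism in~$\dg\kk$.

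The only difference is in how this splitting is established. You invoke the abstract Dold--Kan normalization, correctly flagging as the main obstacle that only the faces $d_i$ with $i>0$ act on the generators; this obstacle is indeed surmountable, since the ``last-face'' version of the normalization (with $N_n=\bigcap_{i\geq 1}\ker d_i$ and projector built from the idempotents $\id-s_j d_{j+1}$) uses precisely those faces. The paper instead sidesteps the issue by writing the complement down explicitly: representing elements of $\DGC^{\EOp}_n(A)$ as tensors arranged on trees with $n$ levels (vertices labelled in $\EOp$, leaves in $A$), it takes $N_n$ to be the span of tree-tensors having no level consisting entirely of unary unit-labelled vertices. Your abstract argument is cleaner for the present statement taken in isolation; the paper's explicit tree model has the advantage of being reused downstream, in the proof of Proposition~\ref{MainResult:QuasiFreeStructure}, to see that $d_0$ carries generators to decomposables.
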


\begin{proof}
We regard the dg-modules $\DGC^{\EOp}_n(A)$, $n\in\NN$, which generate the terms of the cotriple resolution $\Res^{\EOp}_n(A)$, $n\in\NN$,
as the components of a simplicial object equipped with a trivial zeroth face operator $d_0 = 0$.

We use that the elements of the dg-module $\DGC^{\EOp}_n(A)$ can be represented, for any $n\in\NN$,
by tensors arranged on trees with $n$ levels, numbered from $1$ to $n$.
We basically label the vertices of these trees $v$ by elements of the operad $\pi_v\in\EOp(r_v)$,
and we label the leaves by elements of our algebra $A$ (see~\cite[\S 4.3]{PartitionHomology}).
We mainly require that the inputs of the operad elements $\pi_v\in\EOp(r_v)$
are in bijection with the ingoing edges of the corresponding vertex $v$.
Let $\DGN_n\DGC^{\EOp}_{\bullet}(A)\subset\DGC^{\EOp}_n(A)$ be the submodule of $\DGC^{\EOp}_n(A)$
spanned by these tree-wise tensors where we have no level which entirely consists of vertices
with one ingoing edge (and labeled by the operad unit $1\in\EOp(1)$ therefore).
This module $\DGN_n\DGC^{\EOp}_{\bullet}(A)$ represents a section of the quotient of the module $\DGC^{\EOp}_n(A)$
over the image of the degeneracy operators.

We now have a decomposition
\begin{equation*}
\DGC^{\EOp}_n(A) = \DGL_n\DGC^{\EOp}_{\bullet}(A)\oplus\DGN_n\DGC^{\EOp}_{\bullet}(A),
\end{equation*}
where we consider the $n$th latching object $\DGL_n\DGC^{\EOp}_{\bullet}(A)$
of this simplicial object $\DGC^{\EOp}_{\bullet}(A)$
in the category of dg-modules.
We have on the other hand $\DGL_n\Res^{\EOp}_{\bullet}(A) = \ESym(\DGL_n\DGC^{\EOp}_{\bullet}(A))$,
where we consider the $n$th latching object $\DGL_n\Res^{\EOp}_n(A)$ of the cotriple resolution $\Res^{\EOp}_{\bullet}(A)$,
because the degeneracy operators, which we use in the definition
of this latching object, are identified with morphisms
of free $\EOp$-algebras
induced by the degeneracy operators of $\DGC^{\EOp}_{\bullet}(A)$.
We accordingly have the identity:
\begin{equation*}
\Res^{\EOp}_n(A) = \DGL_n\Res^{\EOp}_{\bullet}(A)\vee\ESym(\DGN_n\DGC^{\EOp}_{\bullet}(A)),
\end{equation*}
in any dimension $n\in\NN$, where $\vee$ denote the coproduct in the category of $\EOp$-algebras.
The latching morphism $\lambda: \DGL_n\Res^{\EOp}_{\bullet}(A)\rightarrow\Res^{\EOp}_n(A)$ which we associate to this latching object
is identified with the canonical inclusion of the summand $\DGL_n\Res^{\EOp}_{\bullet}(A)$
in this coproduct with the free $\EOp$-algebra~$\ESym(\DGN_n\DGC^{\EOp}_{\bullet}(A))$,
which forms a cofibrant object by definition of the model structure
on the category of $\EOp$-algebras.
We conclude from this examination that this latching morphism defines a cofibration, for each $n\in\NN$,
and hence, that the cotriple resolution $\Res^{\EOp}_{\bullet}(A)$
is Reedy cofibrant as a simplicial object
in the model category of $\EOp$-algebras.
\end{proof}

We apply the cosimplicial framing construction of~\S\ref{Background:CosimplicialFramingConstruction} to the free $\EOp$-algebras $\Res^{\EOp}_n(A)$,
and we use this particular cosimplicial framing to determine the geometric realization
of the cotriple resolution $\Res^{\EOp}_{\bullet}(A)$.
We accordingly set:
\begin{equation*}
|\Res^{\EOp}_{\bullet}(A)| = \int^{\underline{n}\in\Delta}\ESym(\underbrace{\ESym\circ\dots\circ\ESym}_n(A)\otimes\DGN_*(\Delta^n)),
\end{equation*}
for any $A\in{}_{\EOp}\dg\kk$.
We also have the following structure result:

\begin{prop}\label{MainResult:QuasiFreeStructure}
The geometric realization of the cotriple resolution is identified with a quasi-free $\EOp$-algebra (in the sense of~\cite[\S 12.3.6]{OperadModules}).
We explicitly have an identity:
\begin{equation*}
|\Res^{\EOp}_{\bullet}(A)| = (\ESym(\DGN_*\DGC_{\bullet}(A)),\partial),
\end{equation*}
where we consider the normalized complex $\DGN_*\DGC_{\bullet}(A)$ of the simplicial dg-module $\DGC_{\bullet}(A)$
equipped with a trivial zeroth face $d_0 = 0$,
and where we have a twisting derivation
$\partial: \ESym(\DGN_*\DGC_{\bullet}(A))\rightarrow\ESym(\DGN_*\DGC_{\bullet}(A))$
which we add to the natural differential of the free $\EOp$-algebra $\ESym(\DGN_*\DGC_{\bullet}(A))$
in order to form the differential of our quasi-free object. This twisting derivation $\partial$ maps the elements
of the generating dg-module $\DGN_*\DGC_{\bullet}(A)\subset\ESym(\DGN_*\DGC_{\bullet}(A))$
to decomposable elements of the free $\EOp$-algebra $\ESym(\DGN_*\DGC_{\bullet}(A))$.
\end{prop}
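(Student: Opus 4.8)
The plan is to compute the geometric realization coend explicitly and show that it carries the claimed quasi-free structure. First I would unpack the definition of the realization using the cosimplicial framing of Construction~\ref{Background:CosimplicialFramingConstruction}. Since each term $\Res^{\EOp}_n(A) = \ESym(\DGC^{\EOp}_n(A))$ is a free $\EOp$-algebra, the framing gives $\Res^{\EOp}_n(A)\otimes\Delta^n = \ESym(\DGC^{\EOp}_n(A)\otimes\DGN_*(\Delta^n))$, so the coend runs over free algebras on the dg-modules $\DGC^{\EOp}_n(A)\otimes\DGN_*(\Delta^n)$. The key observation is that the free $\EOp$-algebra functor $\ESym$ is a left adjoint, hence preserves colimits, and a coend is a colimit; so $\ESym$ should commute with the coend. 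This reduces the computation to a coend formed in the underlying category $\dg\kk$, namely
\begin{equation*}
\int^{\underline{n}\in\Delta}\DGC^{\EOp}_n(A)\otimes\DGN_*(\Delta^n),
\end{equation*}
at least for the part of the structure governed by the face and degeneracy maps coming from the generating modules.

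The second step is to identify this dg-module coend with the normalized chain complex of the simplicial dg-module $\DGC^{\EOp}_{\bullet}(A)$. This is the standard fact that the realization of a simplicial object in chain complexes, formed against the normalized chains of the simplices $\DGN_*(\Delta^{\bullet})$, computes the total complex of the associated bicomplex, which is naturally isomorphic to the normalized complex $\DGN_*\DGC^{\EOp}_{\bullet}(A)$ (one may invoke the Eilenberg--Zilber/Dold--Kan machinery here, or simply the description of the coend as the quotient identifying degeneracies and alternating faces). I would set up the bicomplex whose columns are the $\DGC^{\EOp}_n(A)$ and whose horizontal differential is the alternating sum of the \emph{internal} face operators; the realization coend then produces the total complex, whose underlying graded object is $\DGN_*\DGC^{\EOp}_{\bullet}(A) = \bigoplus_n \DGN_n\DGC^{\EOp}_{\bullet}(A)$, using the decomposition established in the proof of Proposition~\ref{MainResult:ReedyCofibrant}.

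The crucial point, and where I expect the main subtlety, is the correct bookkeeping of the differential. The realization carries two separate contributions: the internal differential of each $\DGC^{\EOp}_n(A)$ and the differential induced by the simplicial structure. As recalled in Remark~\ref{MainResult:CotripleResolution}, the face operators $d_i$ with $i>0$ and the degeneracies are induced by morphisms of the \emph{generating} dg-modules $\DGC^{\EOp}_{\bullet}(A)$, hence restrict to an internal differential on $\DGN_*\DGC^{\EOp}_{\bullet}(A)$ which combines with the dg-differential to give the ``natural differential of the free $\EOp$-algebra''. The zeroth face $d_0$, by contrast, is \emph{not} a morphism of free algebras preserving generators: it is built from the augmentation $\epsilon\colon\ESym\to\Id$ applied to the outermost factor, and so sends a generator of $\DGC^{\EOp}_n(A)=\ESym^{\circ n}(A)$ into a decomposable element of the free algebra $\ESym(\DGC^{\EOp}_{n-1}(A))$. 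This is precisely the twisting derivation $\partial$. The hard part of the argument is therefore to verify that isolating the $d_0$-contribution in the total differential yields a well-defined derivation $\partial$ on $\ESym(\DGN_*\DGC^{\EOp}_{\bullet}(A))$ landing in decomposables, and that the remaining terms assemble exactly into the canonical free-algebra differential. I would argue this by checking on generators: a generator in $\DGN_*\DGC^{\EOp}_{\bullet}(A)$ has its image under the full realization differential split, according to the weight decomposition of $\ESym$, into a weight-preserving part (the internal and $i>0$ faces, which stay in the generators and give the free differential) and a strictly weight-increasing part coming from $d_0$ (which lands in decomposables and defines $\partial$). Verifying that $\partial$ squares appropriately with the free differential to give an actual differential is then forced by the simplicial identities, so that $(\ESym(\DGN_*\DGC^{\EOp}_{\bullet}(A)),\partial)$ is the asserted quasi-free object in the sense of~\cite[\S 12.3.6]{OperadModules}.
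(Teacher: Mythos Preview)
Your argument has a genuine gap at its very first step. The assertion that $\ESym$ commutes with the coend is not justified: while $\ESym$ does preserve colimits, this would only let you pull $\ESym$ outside the coend if the bifunctor $(\underline{m},\underline{n})\mapsto\ESym(\DGC_m(A)\otimes\DGN_*(\Delta^n))$ itself factored through $\ESym$ applied to a bifunctor with values in $\dg\kk$. In the covariant (cosimplicial) direction this is fine, but in the contravariant direction the simplicial operator $d_0\otimes\Delta^k$ is \emph{not} of the form $\ESym(g)$ for any dg-module map $g$: by Construction~\ref{Background:CosimplicialFramingConstruction} it is built from the map $f^{\sharp}$, which uses the coproduct of the Barratt--Eccles operad and its coaction on $\DGN_*(\Delta^k)$, and it sends generators to decomposables. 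Hence the coend cannot be rewritten as $\ESym$ of a coend in $\dg\kk$. Your caveat ``at least for the part of the structure governed by the face and degeneracy maps coming from the generating modules'' does not repair this: the coend is a single colimit taken over \emph{all} structure maps simultaneously, including the $d_0$-relations, so you cannot compute it using only some of them and then reintroduce $d_0$ afterwards as a differential correction. The $d_0$-relations already affect the underlying graded object of the realization, and the fact that this object nonetheless turns out to be free is precisely the nontrivial content of the proposition.

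The paper's proof confronts this directly. It first forgets differentials and writes the coend as the quotient of $\ESym(\bigoplus_n\DGC_n(A)\otimes\DGN_*(\Delta^n))$ by the ideal generated by all coend relations (including the $d_0$-ones). It then constructs an explicit map $\phi:\ESym(\DGN_*\DGC_{\bullet}(A))_{\flat}\to|\Res^{\EOp}_{\bullet}(A)|_{\flat}$ sending $[\xi]$ to $\xi\otimes[i_n]$, and an inverse $\psi$ built by an inductive reduction procedure that exploits the specific shape of the Barratt--Eccles coaction on $\DGN_*(\Delta^m)$ (namely that it produces factors of strictly smaller simplicial degree). The delicate point is checking that $\psi$ respects the coend relations, which is done by induction on the skeleta. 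Only after this isomorphism of graded $\EOp$-algebras is established does one transport the differential and read off the twisting derivation $\partial$ from the $d_0$-contribution in the formula $\delta(\xi\otimes[i_n])$. Your intuition about where $\partial$ comes from is correct, but the identification of the underlying graded object requires this hands-on argument rather than an abstract left-adjoint one.
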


\begin{proof}
We use the definition of the normalized complex $\DGN_*(\Delta^n)$ as the quotient
of the Moore complex $\DGC_*(\Delta^n) = \kk[\Delta^n]$
over the image of the degeneracy operators.
We adopt the notation $[\sigma]\in\DGN_m(\Delta^k)$ for the class of a simplex $\sigma\in(\Delta^k)_m$ in $\DGN_*(\Delta^k)$.
We use similar conventions for the normalized complex of the simplicial object of the category of dg-modules $\DGC_{\bullet}(A)$.
We can identify our coend, in the definition of the geometric realization $|\Res^{\EOp}_{\bullet}(A)|$,
with the quotient of the free $\EOp$-algebra $F = \ESym(\bigoplus_n\DGC_n(A)\otimes\DGN_*(\Delta^n))$
over the ideal generated by the relations:
\begin{equation}\label{MainResult:QuasiFreeStructure:GeneratingRelations}
\xi\otimes u_*([\sigma])\equiv(u^*\otimes\Delta^k)(\xi\otimes[\sigma]),
\end{equation}
for any tensor $\xi\otimes[\sigma]\in\DGC_n(A)\otimes\DGN_m(\Delta^k)$ and any morphism $u: \underline{k}\rightarrow\underline{n}$
of the simplicial category $\Delta$,
where we consider the generating element $\xi\otimes u_*([\sigma])\in\DGC_n(A)\otimes\DGN_*(\Delta^n)$
on the left-hand side,
and the image of the element $\xi\otimes[\sigma]\in\DGC_n(A)\otimes\DGN_m(\Delta^n)$
under the morphism of cosimplicial frames
\begin{equation}\label{MainResult:QuasiFreeStructure:SimplicialOperators}
\ESym(\DGC_n(A)\otimes\DGN_*(\Delta^k))\xrightarrow{u^*\otimes\Delta^k}\ESym(\DGC_k(A)\otimes\DGN_*(\Delta^k))
\subset F
\end{equation}
associated to the simplicial operator $u^*: \Res^{\EOp}_n(A)\rightarrow\Res^{\EOp}_k(A)$
of the cotriple resolution $\Res^{\EOp}_{\bullet}(A) = \ESym(\DGC_{\bullet}(A))$
on the right-hand side.

We forget about differentials for the moment.
We use the subscript $\flat$ to mark this forgetful operation.
We consider the morphism of $\EOp_{\flat}$-algebras
\begin{equation}\label{MainResult:QuasiFreeStructure:Map}
\ESym(\DGN_*\DGC_{\bullet}(A))_{\flat}\xrightarrow{\phi}\int^{\underline{k}\in\Delta}\ESym(\DGC_k(A)\otimes\DGN_*(\Delta^k))_{\flat}
= |\Res^{\EOp}_{\bullet}(A)|_{\flat}
\end{equation}
which carries any generating element $[\xi]\in\DGN_n\DGC_{\bullet}(A)$ to the tensor $\xi\otimes[i_n]\in\DGC_n(A)\otimes\DGN_n(\Delta^n)$,
where $i_n$ denotes the fundamental simplex of the simplicial set $\Delta^n$.
We simply note that we have $s_j(\xi)\otimes[i_n]\equiv\xi\otimes[s^j(i_n)]$
according to our coend relations, and hence, we get $[s^j(i_n)] = [s_j(i_n)]\equiv 0\Rightarrow s_j(\xi)\otimes[i_n]\equiv 0$,
from which we conclude that our morphism~(\ref{MainResult:QuasiFreeStructure:Map})
is well defined.
We aim to prove that this map~(\ref{MainResult:QuasiFreeStructure:Map})
is a bijection.
We use a reduction procedure to define a morphism $\psi$
such that $\psi\phi = \id$ and $\phi\psi = \id$.

Let $\xi\otimes[\sigma]\in\DGC_n(A)\otimes\DGN_m(\Delta^n)$
be a generating element
of our coend.
We have $\xi\otimes[\sigma]\equiv(\sigma^*\otimes\Delta^m)(\xi\otimes[i_m])$,
where we consider the morphism $\sigma: \underline{m}\rightarrow\underline{n}$ and the simplicial operator $\sigma^*$
associated to the simplex $\sigma\in(\Delta^n)_m$. We use that the image of $[i_m]\in\DGN_m(\Delta^m)$
under the coaction of the Barratt-Eccles operad
on $\DGN_*(\Delta^m)$
involves factors $[i_m]\in\DGN_m(\Delta^m)$ and simplices $[\tau]\in\DGN_k(\Delta^m)$
of degree $k<m$.
We can therefore use an induction process to assign an element $\psi(\xi\otimes[\sigma]) := \psi(\sigma^*\otimes\Delta^m)(\xi\otimes[i_m])\in\ESym(\DGN_*\DGC_{\bullet}(A))$
to any $\xi\otimes[\sigma]\in\DGC_n(A)\otimes\DGN_m(\Delta^n)$.

We just have to check that this construction carries each side of our generating relations~(\ref{MainResult:QuasiFreeStructure:GeneratingRelations})
to the same element in $\ESym(\DGN_*\DGC_{\bullet}(A))_{\flat}$.
We proceed by induction on the simplicial dimension $n\in\NN$ of these relations.
We use that the preservation of the generating relations in dimension $\leq n-1$,
implies that our mapping yields a well-defined morphism
on the $n-1$ dimensional skeleton of our coend:
\begin{equation}\label{MainResult:QuasiFreeStructure:ConverseMap}
\sk_{n-1}|\Res^{\EOp}_{\bullet}(A)|_{\flat} := \int^{\underline{k}\in\Delta_{\leq n-1}}\ESym(\DGC_k(A)\otimes\DGN_*(\Delta^k))_{\flat}
\xrightarrow{\psi}\ESym(\DGN_*\DGC_{\bullet}(A))_{\flat}.
\end{equation}
We equivalently use that all relations of our coend are satisfied in dimension $\leq n-1$
when this is so for generating elements.

We fix a tensor a tensor $\xi\otimes[\sigma]\in\DGC_n(A)\otimes\DGN_m(\Delta^k))$
and a morphism $u: \underline{k}\rightarrow\underline{n}$
as in our relation~(\ref{MainResult:QuasiFreeStructure:GeneratingRelations}).
We have $(u^*\otimes\Delta^k)(\xi\otimes[\sigma]) = (u^*\otimes\Delta^k)(\xi\otimes\sigma_*[i_m])
= (\id\otimes\sigma_*)(u^*\otimes\Delta^m)(\xi\otimes[i_m])$,
where we consider the cosimplicial structure morphism
$\id\otimes\sigma_*: \ESym(\DGC_n(A)\otimes\DGN_*(\Delta^m))\rightarrow\ESym(\DGC_n(A)\otimes\DGN_*(\Delta^k))$
of the cosimplicial frame $\ESym(\DGC_n(A)\otimes\DGN_*(\Delta^{\bullet}))$
associated to the map $\sigma: \underline{m}\rightarrow\underline{k}$.
We see that this element $\varpi = (\id\otimes\sigma_*)(u^*\otimes\Delta^m)(\xi\otimes[i_m])$
is given by applying the map $\sigma_*: \DGN_*(\Delta^m)\rightarrow\DGN_*(\Delta^k)$
to the factors $[\tau]\in\DGN_*(\Delta^m)$
occurring in the expansion of $(u^*\otimes\Delta^m)(\xi\otimes[i_m])\in\ESym(\DGC_k(A)\otimes\DGN_*(\Delta^m))$.
We accordingly have the relation $\psi((\id\otimes\sigma_*)(u^*\otimes\Delta^m)(\xi\otimes[i_m]))
= \psi((\sigma^*\otimes\Delta^m)(u^*\otimes\Delta^m)(\xi\otimes[i_m]))$
by induction hypothesis,
from which we conclude that we have $\psi((u^*\otimes\Delta^k)(\xi\otimes[\sigma]) = \psi((u\sigma)^*\otimes\Delta^m)(\xi\otimes[i_m])$,
and this verification finishes the proof that our morphism $\psi$
is well defined.
We simply note that we clearly have $\psi\phi = \id$ and $\phi\psi = \id$ by definition of this map $\psi$
to complete the proof that our morphism~(\ref{MainResult:QuasiFreeStructure:Map})
is a bijection.

We use this bijection to transport the differential of the $\EOp$-algebra $|\Res^{\EOp}_{\bullet}(A)|$
to the object $\ESym(\DGN_*\DGC_{\bullet}(A))$.
We define the twisting derivation of the proposition as the difference between this differential inherited from $|\Res^{\EOp}_{\bullet}(A)|$
and the natural differential of the free $\EOp$-algebra $\ESym(\DGN_*\DGC_{\bullet}(A))$.
For a generating element $[\xi]\in\DGN_n\DGC_{\bullet}(A)$,
we have the formula:
\begin{multline}\label{MainResult:QuasiFreeStructure:Differential}
\textstyle{\delta\phi[\xi] = \delta(\xi\otimes[i_n]) = \delta(\xi)\otimes[i_n] + \pm\sum_{i=0}^n (-1)^i\xi\otimes d_i[i_n]} \\
\textstyle{= \delta(\xi)\otimes[i_n] + \pm\xi\otimes[d_0(i_{n-1})] + \pm\sum\nolimits_{i=1}^n (-1)^i d_i(\xi)\otimes[i_{n-1}]},
\end{multline}
from which we readily conclude that $\partial[\xi]$ is identified with the image
of the tensor $\xi\otimes[d_0(i_{n-1})]\in\DGC_n(A)\otimes\DGN_{n-1}(\Delta^n)$
under our reduction procedure.
Hence, we get:
\begin{equation}
\partial[\xi] = \psi((d_0\otimes\Delta^{n-1})(\xi\otimes[i_{n-1}])).
\end{equation}
If we use the representation of the complex $\DGN_*\DGC_{\bullet}(A)$ given in the proof of Proposition~\ref{MainResult:ReedyCofibrant},
then we readily see that $d_0(\xi)$
defines a decomposable element of the free $\EOp$-algebra.
This observation implies that $\partial[\xi]$ is decomposable yet, because we inductively apply our map $\psi$
to the factors of this tensor $d_0(\xi)\in\ESym(\DGC_{n-1}(A))$
in order to get our result.
The proof of our proposition is now complete.
\end{proof}

\begin{constr}[The applications of modules over operads]\label{MainResult:OperadModules}
We can observe that the functor $|\Res^{\EOp}_{\bullet}(-)|$
is represented by a bimodule over the operad~$\EOp$
in the sense of~\cite[\S 9.2]{OperadModules}.
We give a brief survey of this correspondence between modules over operads and functors with a view towards the applications
which we consider in this paper. We refer to the cited monograph~\cite{OperadModules}
for details on this subject.

We first consider the category of right $\EOp$-modules $\MCat{}_{\EOp}$,
whose objects are symmetric sequences $\MOp$
equipped with a right action $\rho: \MOp\circ\EOp\rightarrow\MOp$
of the operad $\EOp$.
To any such object $\MOp\in\MCat{}_{\EOp}$, we associate a functor $\SSym_{\EOp}(\MOp,-): {}_{\EOp}\dg\kk\rightarrow\dg\kk$,
which we can define by a relative composition product:
\begin{equation*}
\SSym_{\EOp}(\MOp,A) = \MOp\circ_{\EOp} A,
\end{equation*}
where we identify the $\EOp$-algebra $A\in{}_{\EOp}\dg\kk$
with a symmetric sequence concentrated in arity $0$
and equipped with a left action of the operad $\EOp$ (a left $\EOp$-module concentrated in arity $0$).
The category $\MCat{}_{\EOp}$ is equipped with a symmetric monoidal structure
that reflects the point-wise tensor product of functors
on $\EOp$-algebras. We moreover have an obvious symmetric monoidal functor $\eta: \dg\kk\rightarrow\MCat{}_{\EOp}$
which identified any dg-module $K\in\dg\kk$
with the symmetric sequence $\MOp$ such that $\MOp(0) = K$ and $\MOp(r) = 0$ for $r>0$.
The $\EOp$-bimodules which we use in our construction are identified with $\EOp$-algebras in the symmetric monoidal category $\MCat{}_{\EOp}$,
and are associated to endofunctor of the category of $\EOp$-algebras (see~\cite[\S 9.1]{OperadModules}).
We also use the notation ${}_{\EOp}\MCat{}_{\EOp}$
for this category of $\EOp$-bimodules. The operad $\EOp$ naturally forms a bimodule over itself.
The categories $\MCat_{\EOp}$ and ${}_{\EOp}\MCat{}_{\EOp}$ also inherit a natural model structure
from the base category of dg-modules.

We can readily extend the constructions of the previous paragraphs to $\EOp$-algebras in right $\EOp$-modules.
We basically replace the operations on the category of dg-modules which we use in our constructions
by structure operations of the symmetric monoidal category of right $\EOp$-modules $\MCat{}_{\EOp}$.
We now consider the geometric realization of the cotriple resolution of the operad $\EOp$,
which we regard as a bimodule over itself or, equivalently, as an $\EOp$-algebra
in the category of right $\EOp$-modules.
This construction returns an $\EOp$-algebra in right $\EOp$-modules $|\Res^{\EOp}_{\bullet}(\EOp)|\in{}_{\EOp}\MCat{}_{\EOp}$,
and, according to the general statement of~\cite[\S 9.2.6]{OperadModules},
we have the relation:
\begin{equation*}
|\Res^{\EOp}_{\bullet}(A)| = |\Res^{\EOp}_{\bullet}(\EOp)|\circ_{\EOp} A,
\end{equation*}
for any $A\in{}_{\EOp}\dg\kk$.
The augmentation of the cotriple resolution~$\epsilon: |\Res^{\EOp}_{\bullet}(A)|\rightarrow A$
is similarly yielded by a morphism
of $\EOp$-bimodules
$\epsilon: |\Res^{\EOp}_{\bullet}(\EOp)|\rightarrow\EOp$,
which we determine from an augmentation of the simplicial object $\Res^{\EOp}_{\bullet}(\EOp)$.
\end{constr}

Let us mention that our proof of Proposition~\ref{MainResult:ReedyCofibrant} remains valid in the category of right $\EOp$-modules
(since we essentially use operations of the underlying symmetric monoidal category in our constructions),
and that $\Res^{\EOp}_{\bullet}(\EOp)$
is therefore Reedy cofibrant
as a simplicial object in the model category of $\EOp$-algebras in right $\EOp$-modules.
The idea is to establish our weak-equivalence statement for the object $|\Res^{\EOp}_{\bullet}(\EOp)|\in{}_{\EOp}\MCat{}_{\EOp}$
and to use general homotopy invariance theorems of~\cite[\S 15]{OperadModules}
in order to deduce the case of $\EOp$-algebras in dg-modules
of our statement
from this universal case.
To be explicit, we gain ou main result from the following statement:

\begin{lemm}\label{MainResult:ModuleEquivalence}
The augmentation of the simplicial object $R_{\bullet} = \Res^{\EOp}_{\bullet}(\EOp)$
induces a weak-equivalence $\epsilon: |\Res^{\EOp}_{\bullet}(\EOp)|\xrightarrow{\sim}\EOp$
when we pass to the geometric realization.
\end{lemm}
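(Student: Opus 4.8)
\emph{The plan is} to reduce the statement to a quasi-isomorphism of underlying right $\EOp$-modules and then to exploit the quasi-free description of the geometric realization provided by Proposition~\ref{MainResult:QuasiFreeStructure}. Since the forgetful functor $\omega\colon{}_{\EOp}\MCat{}_{\EOp}\rightarrow\MCat{}_{\EOp}$ creates weak-equivalences, it suffices to prove that the augmentation induces an isomorphism in homology on the underlying right $\EOp$-module $\omega|\Res^{\EOp}_{\bullet}(\EOp)|$. By Proposition~\ref{MainResult:QuasiFreeStructure}, this underlying object is the quasi-free module $(\ESym(\DGN_*\DGC_{\bullet}(\EOp)),\partial)$, where the twisting derivation $\partial$ encodes the zeroth face $d_0$ of the cotriple resolution and strictly raises the tensor weight of $\ESym$.

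The key input is the extra degeneracy of the cotriple resolution. First I would recall that the augmented simplicial object $\Res^{\EOp}_{\bullet}(\EOp)\rightarrow\EOp$ carries an extra degeneracy $s_{-1}$, induced by the unit $\iota\colon\Id\rightarrow\omega\ESym$ of the free $\EOp$-algebra adjunction, as soon as we forget the $\EOp$-algebra structure and work in $\MCat{}_{\EOp}$. This extra degeneracy makes $\omega\Res^{\EOp}_{\bullet}(\EOp)\rightarrow\EOp$ into a simplicial homotopy equivalence, so that the normalized chain complex $\DGN_*(\omega\Res^{\EOp}_{\bullet}(\EOp))$, formed with the full simplicial differential $\sum_{i=0}^n(-1)^i d_i$, is quasi-isomorphic to $\EOp$ through the augmentation.

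The heart of the argument is to bridge this underlying contraction with the algebra-level geometric realization. The obstacle is that $\omega$, being a right adjoint, does not commute with the coend defining $|\Res^{\EOp}_{\bullet}(\EOp)|$, so the extra degeneracy cannot be transported naively. Instead I would compare the quasi-free complex $\ESym(\DGN_*\DGC_{\bullet}(\EOp))$ with the normalized complex $\DGN_*(\omega\Res^{\EOp}_{\bullet}(\EOp))=\DGN_*(\omega\ESym(\DGC_{\bullet}(\EOp)))$ by means of the Eilenberg--Zilber/shuffle equivalences. Since $\ESym$ is assembled from the symmetric tensor powers $(\EOp(r)\otimes(-)^{\otimes r})_{\Sigma_r}$ and each $\EOp(r)$ is $\Sigma_r$-free, the functor $\ESym$ preserves quasi-isomorphisms (this is the homotopy invariance of~\cite[\S 15]{OperadModules}), and the iterated shuffle map $\ESym(\DGN_*\DGC_{\bullet}(\EOp))\rightarrow\DGN_*(\omega\ESym(\DGC_{\bullet}(\EOp)))$ is then a quasi-isomorphism which intertwines the twisting derivation $\partial$ with the $d_0$-face hidden in the total simplicial differential. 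Composing this shuffle quasi-isomorphism with the contraction of the previous step yields the desired weak-equivalence $\epsilon\colon|\Res^{\EOp}_{\bullet}(\EOp)|\xrightarrow{\sim}\EOp$.

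I expect the main obstacle to be precisely this compatibility: verifying that the Eilenberg--Zilber comparison correctly matches the weight-raising twisting derivation $\partial$ of Proposition~\ref{MainResult:QuasiFreeStructure} with the $d_0$-face of the cotriple resolution, while keeping track of the $\Sigma_r$-coinvariants over the ground field. An alternative, more hands-on route would bypass the shuffle maps and build a contracting homotopy $h$ on $(\ESym(\DGN_*\DGC_{\bullet}(\EOp)),\partial)$ directly from $s_{-1}$, using the reduction procedure $\psi$ of Proposition~\ref{MainResult:QuasiFreeStructure} together with an induction on the tensor weight raised by $\partial$; the verification of the homotopy identity $[\partial,h]=\id-\eta\epsilon$ against the weight-raising twist would then become the delicate point.
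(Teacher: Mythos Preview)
Your approach differs substantially from the paper's. The paper filters the quasi-free object $(\ESym(\DGN_*\DGC_{\bullet}(\EOp)),\partial)$ by the weight of $\ESym$; since $\partial$ strictly raises weight it vanishes on the associated graded, and the filtration is bounded arity-wise, so one gets a convergent spectral sequence whose $E^1$-page the operadic K\"unneth formula identifies with $\DGH_*(\EOp)\circ\DGH_*\DGN_*\DGC_{\bullet}(\EOp)$. The paper then recognizes $\DGC_{\bullet}(\EOp)$ as the one-sided bar construction $C_{\bullet}(\IOp,\EOp,\EOp)$, whose normalized homology is $\IOp$ by a standard extra-degeneracy argument, so the augmentation is an isomorphism on $E^1$ and the comparison theorem concludes. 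The extra degeneracy is thus exploited on the \emph{smaller} complex $\DGC_{\bullet}(\EOp)$, where no twisting interferes.

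Your Eilenberg--Zilber route has a genuine gap precisely where you anticipate it. The shuffle map $\ESym(\DGN_*\DGC_{\bullet}(\EOp))\rightarrow\DGN_*(\ESym(\DGC_{\bullet}(\EOp)))$ is a chain map, and a quasi-isomorphism, only for the \emph{diagonal} simplicial structure on the target --- the one induced levelwise from $\DGC_{\bullet}$. But the cotriple $d_0$ on $\Res^{\EOp}_{\bullet}(\EOp)$ is not diagonal: it is the operadic multiplication $\ESym(\DGC_n)\rightarrow\DGC_n=\ESym(\DGC_{n-1})$, which reshuffles tensor weights. The intertwining you assert between $\partial$ (built from the $\EOp$-coalgebra structure on $\DGN_*(\Delta^{\bullet})$ via the reduction $\psi$) and this non-diagonal $d_0$ is therefore not a formal consequence of Eilenberg--Zilber, and I do not see how to verify it short of a direct computation that would amount to reproving the lemma. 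Even granting the intertwining, you would still need to show that the shuffle map remains a quasi-isomorphism \emph{after} the twist --- and the natural tool for that is again a weight filtration, which brings you back to the paper's spectral sequence. Your alternative route through a contracting homotopy built from $s_{-1}$ faces the same obstacle: $s_{-1}$ lands in generators while $\partial$ produces decomposables, so checking $[\partial,h]=\id-\eta\epsilon$ on decomposables forces the same unpacking of the interaction between the operadic composition and the cosimplicial frame. The paper's filtration cleanly decouples these two ingredients.
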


\begin{proof}
The quasi-free object decomposition of Proposition~\ref{MainResult:QuasiFreeStructure}
remains valid when we work in the category of right $\EOp$-modules (like the result of Proposition~\ref{MainResult:ReedyCofibrant}).
We then have
\begin{equation*}
|\Res^{\EOp}_{\bullet}(\EOp)| = (\ESym(\DGN_*\DGC_{\bullet}(\EOp)),\partial),
\end{equation*}
where we take the free $\EOp$-algebra on the right $\EOp$-module $\DGN_*\DGC_{\bullet}(\EOp)$
and where $\partial: \ESym(\DGN_*\DGC_{\bullet}(\EOp))\rightarrow\ESym(\DGN_*\DGC_{\bullet}(\EOp))$
is now a derivation of $\EOp$-algebras in right $\EOp$-modules.

We filter the object $\ESym(\DGN_*\DGC_{\bullet}(\EOp))$ by the weight grading of the free $\EOp$-algebra $\ESym(-)$.
We immediately see that this filtration is bounded in each arity $n>0$, when we fix a term $\ESym(\DGN_*\DGC_{\bullet}(\EOp))(n)$
of the right $\EOp$-module underlying $\ESym(\DGN_*\DGC_{\bullet}(\EOp))$,
because the tensor powers of a right $\EOp$-module $\MOp$
satisfy $\MOp^{\otimes r}(n) = 0$ for $n>r$
when we have $\MOp(0) = 0$.
The twisting derivation $\partial: \ESym(\DGN_*\DGC_{\bullet}(\EOp))\rightarrow\ESym(\DGN_*\DGC_{\bullet}(\EOp))$ increases this weight filtration,
because the observation that $\partial$ carries generators to decomposable elements in Proposition~\ref{MainResult:QuasiFreeStructure}
implies that this increasing statement is fulfilled for generators.
We accordingly have a convergent spectral sequence $\DGE^r(\Res^{\EOp}_{\bullet}(\EOp))\Rightarrow\DGH_*|\Res^{\EOp}_{\bullet}(\EOp)|$
such that $\DGE^0(\Res^{\EOp}_{\bullet}(\EOp)) = \ESym(\DGN_*\DGC_{\bullet}(\EOp))$.
We also have an identity $\ESym(\DGN_*\DGC_{\bullet}(\EOp)) = \EOp\circ\DGN_*\DGC_{\bullet}(\EOp)$
when we use the equivalence between $\EOp$-algebras in right $\EOp$-modules
and $\EOp$-bimodules,
and the operadic K\"unneth formula $\DGH_*(\MOp\circ\NOp) = \DGH_*(\MOp)\circ\DGH_*(\NOp)$ (see~\cite[Lemma 2.1.3]{PartitionHomology})
implies that we have the identity $\DGE^1(\Res^{\EOp}_{\bullet}(\EOp)) = \DGH_*(\EOp)\circ\DGH_*\DGN_*\DGC_{\bullet}(\EOp)$
on the $E^1$-page of our spectral sequence $\DGE^r(\Res^{\EOp}_{\bullet}(\EOp))$.

We can identify the right $\EOp$-module $\DGC_{\bullet}(\EOp)$ with the instance $C_{\bullet} = C_{\bullet}(\IOp,\EOp,\EOp)$
of the simplicial bar complex of~\cite[\S 4.1.5]{PartitionHomology}, where $\IOp$
denotes the unit operad.
The results of~\cite[Lemma 4.7.4 and Theorem 4.1.8]{PartitionHomology} imply that have an identity $\DGH_*\DGN_*\DGC_{\bullet}(\EOp) = \IOp$,
from which we readily deduce that our augmentation morphism $\epsilon: |\Res^{\EOp}_{\bullet}(\EOp)|\rightarrow\EOp$
induces an isomorphism on the $E^1$-page
of our spectral sequence.
Then we can use standard comparison arguments to get our conclusion.
\end{proof}

We can now establish the main statement of this paper:

\begin{thm}\label{MainResult:Statement}
The geometric realization of the cotriple resolution of an $\EOp$-algebra in dg-modules $A\in{}_{\EOp}\dg\kk$
is endowed with a weak-equivalence $\epsilon: |\Res^{\EOp}_{\bullet}(A)|\xrightarrow{\sim} A$
and defines a cofibrant resolution of our object $A$
in the model category of $\EOp$-algebras ${}_{\EOp}\dg\kk$.
\end{thm}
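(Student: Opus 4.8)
The plan is to split the statement into its two halves --- cofibrancy of $|\Res^{\EOp}_{\bullet}(A)|$ and the weak-equivalence property of $\epsilon$ --- and to reduce the second half to the universal computation already carried out in Lemma~\ref{MainResult:ModuleEquivalence}. The cofibrancy is the easy part: since $\kk$ is a field, the object $A$ is cofibrant as a dg-module, so Proposition~\ref{MainResult:ReedyCofibrant} tells us that $\Res^{\EOp}_{\bullet}(A)$ is Reedy cofibrant in $\simp({}_{\EOp}\dg\kk)$. The geometric realization of a Reedy cofibrant simplicial object is automatically cofibrant in ${}_{\EOp}\dg\kk$, by the general feature of this coend construction recalled at the start of the section. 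Thus $|\Res^{\EOp}_{\bullet}(A)|$ is cofibrant, and once we know that $\epsilon$ is a weak-equivalence it will follow at once that $|\Res^{\EOp}_{\bullet}(A)|$ defines a cofibrant resolution of $A$.

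For the weak-equivalence, I would pass to the level of $\EOp$-bimodules through the representation of Construction~\ref{MainResult:OperadModules}. There the functor $|\Res^{\EOp}_{\bullet}(-)|$ is encoded by the bimodule $|\Res^{\EOp}_{\bullet}(\EOp)|$, we have the identity
\begin{equation*}
|\Res^{\EOp}_{\bullet}(A)| = |\Res^{\EOp}_{\bullet}(\EOp)|\circ_{\EOp} A,
\end{equation*}
and the augmentation $\epsilon$ on $|\Res^{\EOp}_{\bullet}(A)|$ is obtained by applying the relative composition product $(-)\circ_{\EOp} A$ to the bimodule morphism $\epsilon: |\Res^{\EOp}_{\bullet}(\EOp)|\rightarrow\EOp$. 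Lemma~\ref{MainResult:ModuleEquivalence} already identifies this morphism as a weak-equivalence in the category of $\EOp$-algebras in right $\EOp$-modules. It therefore remains to propagate this weak-equivalence through the functor $(-)\circ_{\EOp} A$, and this is where I would invoke the homotopy-invariance theorems for relative composition products of~\cite[\S 15]{OperadModules}.

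The main obstacle is to check that the cofibrancy hypotheses of those homotopy-invariance theorems are satisfied, since the relative composition product only has the correct homotopical behaviour on sufficiently cofibrant inputs. On the module side, the source $|\Res^{\EOp}_{\bullet}(\EOp)|$ is cofibrant as a right $\EOp$-module: the argument of Proposition~\ref{MainResult:ReedyCofibrant} transposes verbatim to $\simp({}_{\EOp}\MCat{}_{\EOp})$, so $\Res^{\EOp}_{\bullet}(\EOp)$ is Reedy cofibrant and its realization is cofibrant. The target $\EOp$ is also cofibrant as a right module over itself, since $\EOp = \IOp\circ\EOp$ is the free right $\EOp$-module generated by the unit object $\IOp$, which is cofibrant because $\kk$ is a field. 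Finally the algebra input $A$ is cofibrant as a dg-module, which is exactly the cofibrancy condition required of the algebra variable over a field. With these facts in hand, the homotopy-invariance theorem shows that $\epsilon\circ_{\EOp} A$ is a weak-equivalence, that is, $\epsilon: |\Res^{\EOp}_{\bullet}(A)|\xrightarrow{\sim} A$. Together with the cofibrancy established in the first step, this yields the cofibrant resolution and completes the argument.
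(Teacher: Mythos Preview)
Your proposal is correct and follows essentially the same route as the paper: reduce to Lemma~\ref{MainResult:ModuleEquivalence} via the bimodule representation of Construction~\ref{MainResult:OperadModules}, check the right-$\EOp$-module cofibrancy of both $|\Res^{\EOp}_{\bullet}(\EOp)|$ and $\EOp$, and invoke the homotopy-invariance theorem of~\cite[\S 15]{OperadModules}. The only step the paper makes more explicit is the passage from cofibrancy of $|\Res^{\EOp}_{\bullet}(\EOp)|$ in ${}_{\EOp}\MCat{}_{\EOp}$ (which is what Reedy cofibrancy plus realization gives you) to cofibrancy in $\MCat{}_{\EOp}$, for which the paper appeals to~\cite[Proposition 12.3.2]{OperadModules}.
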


Recall that we assume that the ground ring is field all through this paper, and that this assumption implies that all objects
are cofibrant in the category of dg-modules.
The above theorem is actually valid without assuming that the ground ring is field,
but we need to restrict ourselves to $\EOp$-algebras which are cofibrant as dg-modules in this case.

\begin{proof}
We already mentioned that the result of Proposition~\ref{MainResult:ReedyCofibrant} holds for the object $\Res^{\EOp}_{\bullet}(\EOp)$
of which geometric realization $|\Res^{\EOp}_{\bullet}(\EOp)|$ therefore forms a cofibrant object
in the category of $\EOp$-algebras in right $\EOp$-modules.
By the general observation of~\cite[Proposition 12.3.2]{OperadModules} (where we use the correspondence of~\cite[Proposition 12.3.2]{OperadModules}),
this assertion implies that $|\Res^{\EOp}_{\bullet}(\EOp)|$
is also cofibrant as a right $\EOp$-module.
The operad $\EOp$ also trivially forms a cofibrant object of the category of right $\EOp$-modules
by definition of our model structure
on this category (see~\cite[Proposition 14.1.A]{OperadModules}).
In this situation, the weak-equivalence $\epsilon: |\Res^{\EOp}_{\bullet}(\EOp)|\xrightarrow{\sim}\EOp$
induces a weak-equivalence
\begin{equation*}
|\Res^{\EOp}_{\bullet}(\EOp)|\circ_{\EOp}A\xrightarrow{\sim}\EOp\circ_{\EOp}A = A,
\end{equation*}
when we pass to the relative composition product, for any $\EOp$-algebra in dg-modules $A\in{}_{\EOp}\dg\kk$
(see~\cite[Theorem 15.1.A(a)]{OperadModules}).
This result gives the conclusion of the theorem since the above morphism is identified with the augmentation
of the $\EOp$-algebra $|\Res^{\EOp}_{\bullet}(A)| = |\Res^{\EOp}_{\bullet}(\EOp)|\circ_{\EOp}A$.
\end{proof}

\section{The unitary setting}\label{UnitarySetting}

We can adapt the constructions of the previous sections to algebras over the unitary version of the Barratt-Eccles operad $\EOp_+$
which has $\EOp_+(0) = \kk$ as arity zero term.

We use the same definition of cosimplicial framing in the category ${}_{\EOp_+}\dg\kk$ as in the case
of non-unitary $\EOp$-algebras~(\S\ref{Background:CosimplicialFramingConstruction}).
We just keep track of the extra term $\EOp_+(0) = \kk$
in the free $\EOp_+$-algebra $\ESym_+(K) = \bigoplus_{r=0}^{\infty}(\EOp_+(r)\otimes K^{\otimes r})_{\Sigma_r}$.
We also have $\ESym_+(K) = \kk\oplus\ESym(K)$ since $\EOp_+(r) = \EOp(r)$ for $r>0$.
Each $\EOp_+$-algebra $A$ inherits a unit morphism $\eta: \kk\rightarrow A$
which is given by the identity between the ground field
and the term of arity zero operations
of our operad $\EOp_+(0) = \kk$. We consider the functor $\omega: {}_{\EOp_+}\dg\kk\rightarrow\kk/\dg\kk$
which retains this unit morphism.
We can form a relative coproduct $\kk/\ESym_+(K) = \kk\bigvee_{\ESym_+(\kk)}\ESym_+(K)$ for any object $K\in\kk/\dg\kk$
to define a left adjoint $\kk/\ESym_+(-): \kk/\dg\kk\rightarrow{}_{\EOp_+}\dg\kk$
of this partial forgetful functor $\omega: {}_{\EOp_+}\dg\kk\rightarrow\kk/\dg\kk$.
We now consider the cotriple resolution functor
\begin{equation*}
\Res^{\EOp_+}_{\bullet}(-): {}_{\EOp_+}\dg\kk\rightarrow\simp({}_{\EOp_+}\dg\kk)
\end{equation*}
which we deduce from this adjoint pair~$\kk/\ESym_+(-): \kk/\dg\kk\rightleftarrows{}_{\EOp_+}\dg\kk :\omega$.
To any $\EOp$-algebra $A$, we can also associate an augmented $\EOp_+$-algebras $A_+$ such that $A_+ = \kk\oplus A$.
This construction clearly defines an isomorphism of categories between the category of $\EOp$-algebras
and the category of augmented $\EOp_+$-algebras. We actually have $\ESym(K)_+ = \ESym_+(K)$
in the case of a free $\EOp$-algebra $A = \ESym(K)$.
For an object $K_+\in\kk/\dg\kk$ of the form $K_+ = \kk\oplus K$, we have the obvious relation $\kk/\ESym_+(K_+) = \ESym(K)_+$,
where we consider the augmented $\EOp_+$-algebra associated to the free $\EOp$-algebra $\ESym(K)\in{}_{\EOp}\dg\kk$.
By a straightforward induction, we then get:
\begin{equation*}
\Res^{\EOp_+}_n(A_+) = \ESym_+(\underbrace{\ESym\circ\dots\circ\ESym}_n(A)),
\end{equation*}
for any dimension $n\in\NN$, so that we have the relation $\Res^{\EOp_+}_{\bullet}(A_+) = \Res^{\EOp}_{\bullet}(A)_+$
in the category of $\EOp_+$-algebras. The construction which we consider in this section therefore defines
an extension of our cotriple resolution
functor on $\EOp$-algebras.

We can now extend the results of Theorem~\ref{MainResult:Statement} to the geometric realization
of the simplicial object $\Res^{\EOp_+}_{\bullet}(A)$
which we associate to any $A\in{}_{\EOp_+}\dg\kk$.
To be explicit, we have the following statement:

\begin{thm}\label{UnitarySetting:MainResult}
The geometric realization of the cotriple resolution of an $\EOp_+$-algebra in dg-modules $A\in{}_{\EOp_+}\dg\kk$
is endowed with a weak-equivalence $\epsilon: |\Res^{\EOp_+}_{\bullet}(A)|\xrightarrow{\sim} A$
and defines a cofibrant resolution of our object $A$
in the model category of $\EOp_+$-algebras ${}_{\EOp_+}\dg\kk$
as soon as $A\not=0$.
\end{thm}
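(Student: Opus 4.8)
The plan is to deduce this unitary statement from the non-unitary Theorem~\ref{MainResult:Statement}, treating first the augmented algebras, where the reduction is direct, and then the general case $A\neq 0$ through the module-over-operad formalism of Construction~\ref{MainResult:OperadModules}. Throughout, the hypothesis $A\neq 0$ enters to guarantee that the unit $\eta\colon\kk\rightarrow A$ is injective (since $1_A\neq 0$ over a field), hence a cofibration of dg-modules; this is what makes the reduced object $\bar A=\mathrm{coker}(\eta)$ well behaved and underlies the whole argument. For $A=0$ the unit fails to be a cofibration and the relative free functor $\kk/\ESym_+(-)$ no longer resolves $A$, which is why that case is excluded.

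For an augmented algebra $A=B_+=\kk\oplus B$ associated to a non-unitary $\EOp$-algebra $B$, I would first record that the cosimplicial framing commutes with the functor $(-)_+$: from $\ESym(M)_+=\ESym_+(M)$ one gets $(\ESym(K)\otimes\Delta^n)_+=\ESym_+(K\otimes\DGN_*(\Delta^n))=\ESym_+(K)\otimes\Delta^n$. Next I would observe that $(-)_+\colon{}_{\EOp}\dg\kk\rightarrow{}_{\EOp_+}\dg\kk$ preserves colimits, since it factors as the equivalence of ${}_{\EOp}\dg\kk$ onto the augmented $\EOp_+$-algebras followed by the forgetful functor from the slice of ${}_{\EOp_+}\dg\kk$ over the initial object $\kk$, and this slice-forgetful functor creates colimits. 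Combining these facts with the identity $\Res^{\EOp_+}_\bullet(B_+)=\Res^{\EOp}_\bullet(B)_+$ established above, the coend defining the geometric realization commutes with $(-)_+$, so that $|\Res^{\EOp_+}_\bullet(B_+)|=|\Res^{\EOp}_\bullet(B)|_+$. Since $(-)_+$ plainly preserves weak-equivalences, Theorem~\ref{MainResult:Statement} then yields $\epsilon\colon|\Res^{\EOp_+}_\bullet(B_+)|\xrightarrow{\sim}B_+$, which settles the augmented case.

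For a general nonzero $A$, the plan is to mimic the proof of Theorem~\ref{MainResult:Statement} in the unitary setting. First I would establish the unitary analogues of Propositions~\ref{MainResult:ReedyCofibrant} and~\ref{MainResult:QuasiFreeStructure}: the terms $\Res^{\EOp_+}_n(A)=\ESym_+(\underbrace{\ESym\circ\cdots\circ\ESym}_n(\bar A))$ are free, the resolution is Reedy cofibrant, and its realization is a quasi-free object $|\Res^{\EOp_+}_\bullet(A)|=(\ESym_+(\DGN_*\DGC^{\EOp_+}_\bullet(A)),\partial)$ with $\partial$ carrying generators to decomposables. Then, extending Construction~\ref{MainResult:OperadModules} to $\EOp_+$, I would represent the realization functor by a bimodule and reduce to the universal case via $|\Res^{\EOp_+}_\bullet(A)|=|\Res^{\EOp_+}_\bullet(\EOp_+)|\circ_{\EOp_+}A$. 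The universal equivalence $\epsilon\colon|\Res^{\EOp_+}_\bullet(\EOp_+)|\xrightarrow{\sim}\EOp_+$ would follow from the weight spectral sequence exactly as in Lemma~\ref{MainResult:ModuleEquivalence}; here the crucial simplification is that the reduced right $\EOp_+$-module $\overline{\EOp_+}=\EOp_+/\kk$ is precisely the non-unitary operad $\EOp$, so that the $E^1$-page is again governed by $\DGH_*\DGN_*\DGC_\bullet(\EOp)=\IOp$, the bar-complex computation already invoked in the non-unitary proof. Finally, $|\Res^{\EOp_+}_\bullet(\EOp_+)|$ and $\EOp_+$ being cofibrant right $\EOp_+$-modules, the homotopy invariance of the relative composition product (used as in the proof of Theorem~\ref{MainResult:Statement}) upgrades this to $\epsilon\colon|\Res^{\EOp_+}_\bullet(A)|\xrightarrow{\sim}A$ for every $A\neq 0$. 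The weak-equivalence, together with the automatic cofibrancy of the realization of a Reedy cofibrant simplicial object, then gives the asserted cofibrant resolution.

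The hardest step will be the passage to the module picture in the unitary setting: verifying that the \emph{relative} free functor $\kk/\ESym_+(-)$ and the partial forgetful functor $\omega\colon{}_{\EOp_+}\dg\kk\rightarrow\kk/\dg\kk$ yield a realization functor $|\Res^{\EOp_+}_\bullet(-)|$ that is genuinely represented by the bimodule $|\Res^{\EOp_+}_\bullet(\EOp_+)|$, and that the quasi-free decomposition survives this relative construction. This is exactly where the identification $\overline{\EOp_+}=\EOp$ and the injectivity of the unit (the hypothesis $A\neq 0$) must be exploited with care, since the multiplication of $A$ does not descend to $\bar A$ and so $\bar A$ is \emph{not} itself an $\EOp$-algebra.
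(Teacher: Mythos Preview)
Your augmented-case argument (reducing $A=B_+$ to the non-unitary theorem via the colimit-preserving functor $(-)_+$) is fine and gives a clean treatment of that special case, which the paper does not spell out. The issue is in your general argument: you assert that $|\Res^{\EOp_+}_\bullet(\EOp_+)|$ and $\EOp_+$ are cofibrant as right $\EOp_+$-modules, and then invoke the same homotopy invariance statement used for Theorem~\ref{MainResult:Statement}. This is precisely the step that fails in the unitary setting. The paper states explicitly that $|\Res^{\EOp_+}_\bullet(\EOp_+)|$ is \emph{not} cofibrant as a right $\EOp_+$-module for the projective model structure of~\cite[\S 14.1.A]{OperadModules}. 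The presence of the arity-zero term $\EOp_+(0)=\kk$ destroys the projective cofibrancy argument that went through for $\EOp$: free right $\EOp_+$-modules now have nontrivial contributions in arity zero, and the quasi-free description no longer yields a cell decomposition in that model structure.

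The paper's workaround is twofold. First, one observes that $|\Res^{\EOp_+}_\bullet(\EOp_+)|$ is still cofibrant as a \emph{symmetric sequence}, and then appeals to the weaker homotopy invariance statement~\cite[Theorem 15.2.A(a)]{OperadModules}, which under that hypothesis gives $|\Res^{\EOp_+}_\bullet(\EOp_+)|\circ_{\EOp_+}A\xrightarrow{\sim}A$ only when $A$ is \emph{cofibrant as an $\EOp_+$-algebra}. Second, one extends to all $A\neq 0$ by checking separately that the functor $A\mapsto|\Res^{\EOp_+}_\bullet(A)|$ preserves weak-equivalences between nonzero $\EOp_+$-algebras. (The paper also sketches an alternative via the Reedy model structure on $\Lambda$-diagrams, in which the relevant object \emph{is} cofibrant and the homotopy invariance holds directly for $A$ with cofibrant unit.) So your overall strategy is right, but the cofibrancy you invoke is false, and you need to substitute one of these two-step arguments at that point.
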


Recall that we assume that the ground ring is a field in general.
The above theorem is actually valid without this assumption (like the result of Theorem~\ref{MainResult:Statement}),
but we need to restrict ourselves to $\EOp_+$-algebras
equipped with a unit morphism $\kk = \EOp_+(0)\rightarrow A$ which is cofibration in the model category of dg-modules
in this case. (If the ground ring is a field, then this requirement is fulfilled as soon as $A\not=0$.)

\begin{proof}[Proof (outline)]
We can adapt the proof of Theorem~\ref{MainResult:Statement} to the case considered in this statement.
Indeed, the object $|\Res^{\EOp_+}_n(\EOp_+)|$ which we consider in our argument line
is no longer cofibrant as a right $\EOp_+$-module
when we consider the projective model structure of~\cite[\S 14.1.A]{OperadModules},
but this object is cofibrant as a symmetric sequence,
and, by~\cite[Theorem 15.2.A(a)]{OperadModules},
this statement is enough to prove that the relative composition product which we form in our argument line
returns a weak-equivalence $|\Res^{\EOp_+}_n(\EOp_+)|\circ_{\EOp_+}A\xrightarrow{\sim}\EOp_+\circ_{\EOp_+}A = A$
when $A$ is cofibrant as an $\EOp_+$-algebra.
Now, we may see the functor $|\Res^{\EOp_+}_n(\EOp_+)|\circ_{\EOp_+}A = |\Res^{\EOp_+}_n(A)|$
carries any weak-equivalence of $\EOp_+$-algebras such that $A\not=0$
to a weak-equivalence.
The general case of our statement can therefore be established from the particular case of cofibrant $\EOp_+$-algebras.
\end{proof}

We can establish the previous theorem by a direct generalization of the argument line of Theorem~\ref{MainResult:Statement},
by using the notion of an augmented $\Lambda$-operad,
which we introduce in~\cite[\S I.3.2]{FresseBook} to model operads with a symmetric monoidal unit
as arity zero term.
The letter $\Lambda$ refers to the category which has the finite ordinals as objects, the injective maps as morphisms,
and which we use to govern the underlying diagram structure of these $\Lambda$-operads.
In the $\Lambda$-operad setting, we can define a model structure on the category of right $\EOp_+$-modules,
which has less fibrations but more cofibrations,
and which we deduce from a Reedy model structure on the category of contravariant $\Lambda$-diagrams (see~\cite[\S II.8.3.4]{FresseBook}).

The object $|\Res^{\EOp_+}_n(\EOp_+)|$ is cofibrant with respect to this Reedy model structure, and one can check that the homotopy invariance statement,
which we use to conclude the proof of Theorem~\ref{MainResult:Statement},
holds for such cofibrant objects
when we can restrict ourselves to $\EOp_+$-algebras
whose unit morphism $\eta: \kk\rightarrow A$
defines a cofibration (which is always true in our setting, unless $A = 0$).
This general homotopy invariance statement reflects the observation
that the functor $|\Res^{\EOp_+}_n(\EOp_+)|\circ_{\EOp_+}A = |\Res^{\EOp_+}_n(A)|$
preserves all weak-equivalences of $\EOp_+$-algebras
such that $A\not=0$.

\section{The case of commutative algebras}\label{CommutativeAlgebraCase}
We also have a counterpart of the result of Theorem~\ref{UnitarySetting:MainResult} for unitary commutative dg-algebras.
Recall that we use the notation $\ComOp$ for the operad associated to the category of (non-unitary) commutative algebras.
This operad satisfies $\ComOp(0) = 0$ and $\ComOp(r) = \kk$ for $r>0$.
We identify the category of unitary commutative dg-algebras with the category of algebras ${}_{\ComOp_+}\dg\kk$
associated to the operad $\ComOp_+$ such that $\ComOp_+(0) = \kk$ and $\ComOp_+(r) = \ComOp(r)$ for $r>0$.
We use that the symmetric algebra $\SSym(K) = \bigoplus_{r=0}^{\infty}(K^{\otimes r})_{\Sigma_r}$
defines a free object functor with values in the category of unitary commutative dg-algebras ${}_{\ComOp_+}\dg\kk$.
We also use a relative tensor product construction $\kk/\SSym(K) = \kk\otimes_{\SSym(\kk)}\SSym(K)$
to define a reduced version of this symmetric algebra construction
for any dg-module $K\in\kk/\dg\kk$
endowed with a morphism $\eta: \kk\rightarrow K$.
We still immediately see that the mapping $\kk/\SSym(-): K\mapsto\kk/\SSym(K)$ defines a left adjoint
of the obvious functor $\omega: {}_{\ComOp_+}\dg\kk\rightarrow\kk/\dg\kk$.
We then consider the cotriple resolution $\Res^{\ComOp_+}_{\bullet}(A)\in\simp({}_{\ComOp_+}\dg\kk)$
which we deduce from this adjunction relation $\kk/\SSym(-): \kk/\dg\kk\rightleftarrows{}_{\ComOp_+}\dg\kk :\omega$,
for any unitary commutative dg-algebra $A\in{}_{\ComOp_+}\dg\kk$.

We also set $\II\SSym(K) = \bigoplus_{r=1}^{\infty}(K^{\otimes r})_{\Sigma_r}$, for any dg-module $K\in\dg\kk$.
(This object $\II\SSym(K)$ represents the augmentation ideal of the symmetric algebra $\SSym(K)$.)
We have, as in the case of algebras over the Barratt-Eccles operad, an obvious isomorphism of categories
between the category of non-unitary commutative dg-algebras and the category of augmented unitary commutative dg-algebras.
To be explicit, to any non-unitary commutative dg-algebra $A\in{}_{\ComOp}\dg\kk$,
we associate the unitary commutative dg-algebra
such that $A_+ = \kk\oplus A$.
We have:
\begin{equation*}
\Res^{\ComOp_+}_n(A_+) = \SSym(\underbrace{\II\SSym\circ\dots\circ\II\SSym}_n(A)),
\end{equation*}
for any dimension $n\in\NN$, when we consider a unitary commutative algebra of this form $A_+ = \kk\oplus A$.

We now assume that the ground ring $\kk$ is a field of characteristic zero.
We have in this context a model structure on the category ${}_{\ComOp_+}\dg\kk$
which we define by the standard adjunction process,
by assuming that the forgetful functor $\omega: {}_{\ComOp_+}\dg\kk\rightarrow\dg\kk$
creates weak-equivalence and fibrations.
We have a weak-equivalence $\epsilon: \EOp_+\xrightarrow{\sim}\ComOp_+$, between the unitary version of the Barratt-Eccles operad $\EOp_+$
and the unitary version of the commutative operad $\ComOp_+$, which is given by the obvious extension of the augmentation
of the Barratt-Eccles operad $\epsilon: \EOp\xrightarrow{\sim}\ComOp$.
We consider the extension and restriction functors
associated to this weak-equivalence
of operads $\epsilon_!: {}_{\EOp_+}\dg\kk\rightleftarrows{}_{\ComOp_+}\dg\kk :\epsilon^*$.
We use that these adjoint functors define a Quillen equivalence (see for instance~\cite[Theorem 16.A]{OperadModules}).
We also have $\epsilon_!\ESym_+(-) = \SSym(-)$ and $\epsilon_!(\kk/\ESym_+(-)) = \kk/\SSym(-)$ by adjunction.
We consequently have the relation $\epsilon_!\Res^{\EOp_+}_{\bullet}(A) = \Res^{\ComOp_+}_{\bullet}(A)$
for any unitary commutative dg-algebra $A\in{}_{\ComOp_+}\dg\kk$.
We still have $\epsilon_!|\Res^{\EOp_+}_{\bullet}(A)| = |\Res^{\ComOp_+}_{\bullet}(A)|$
when we pass to the geometric realization, because the left adjoint functor of a Quillen adjunction
preserves cosimplicial frames (and coends). Theorem~\ref{UnitarySetting:MainResult}
therefore admits the following corollary:

\begin{thm}\label{CommutativeAlgebras:MainResult}
We assume that the ground ring $\kk$ is a field of characteristic zero. We consider the geometric realization of the cotriple resolution
of a unitary commutative dg-algebra $A\in{}_{\ComOp_+}\dg\kk$.
We then have a weak-equivalence $\epsilon: |\Res^{\ComOp_+}_{\bullet}(A)|\xrightarrow{\sim} A$,
so that $|\Res^{\ComOp_+}_{\bullet}(A)|$ defines a cofibrant resolution of our object $A$
in the model category of unitary commutative dg-algebras ${}_{\ComOp_+}\dg\kk$
as soon as $A\not=0$.
\qed
\end{thm}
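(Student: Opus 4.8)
The plan is to deduce this statement from Theorem~\ref{UnitarySetting:MainResult}, applied to the unitary Barratt-Eccles operad, by transporting the weak-equivalence of that theorem along the Quillen equivalence $\epsilon_!: {}_{\EOp_+}\dg\kk\rightleftarrows{}_{\ComOp_+}\dg\kk :\epsilon^*$ associated to the operad morphism $\epsilon: \EOp_+\xrightarrow{\sim}\ComOp_+$. First I would regard the given unitary commutative dg-algebra $A$ as an $\EOp_+$-algebra through the restriction functor $\epsilon^* A$, whose underlying pointed dg-module agrees with that of $A$. In particular the unit morphism $\kk\rightarrow\epsilon^* A$ is a cofibration of dg-modules as soon as $A\not=0$ (recall that $\kk$ is a field in this context), so that $\epsilon^* A$ fulfils the hypothesis required to run Theorem~\ref{UnitarySetting:MainResult}.

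Then I would apply Theorem~\ref{UnitarySetting:MainResult} to the $\EOp_+$-algebra $\epsilon^* A$. This yields the weak-equivalence
\begin{equation*}
\epsilon: |\Res^{\EOp_+}_{\bullet}(\epsilon^* A)|\xrightarrow{\sim}\epsilon^* A,
\end{equation*}
together with the information that the source $|\Res^{\EOp_+}_{\bullet}(\epsilon^* A)|$ is cofibrant in ${}_{\EOp_+}\dg\kk$, since Theorem~\ref{UnitarySetting:MainResult} exhibits it as a cofibrant resolution. The bridge to the commutative setting is the identity $\epsilon_!|\Res^{\EOp_+}_{\bullet}(\epsilon^* A)| = |\Res^{\ComOp_+}_{\bullet}(A)|$, which follows from the relations $\epsilon_!\ESym_+(-) = \SSym(-)$ and $\epsilon_!(\kk/\ESym_+(-)) = \kk/\SSym(-)$ recorded above, together with the observation that the left adjoint $\epsilon_!$ preserves cosimplicial frames and coends, and hence commutes with the formation of geometric realizations. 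Moreover, since $\epsilon_!$ is a left Quillen functor, the object $|\Res^{\ComOp_+}_{\bullet}(A)| = \epsilon_!|\Res^{\EOp_+}_{\bullet}(\epsilon^* A)|$ is again cofibrant.

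Finally I would invoke the defining property of a Quillen equivalence. Every object of ${}_{\ComOp_+}\dg\kk$ is fibrant, since the forgetful functor to $\dg\kk$ creates fibrations and all dg-modules are fibrant; in particular $A$ is fibrant. The adjoint transpose (under $\epsilon_!\dashv\epsilon^*$) of the augmentation $\epsilon: |\Res^{\ComOp_+}_{\bullet}(A)|\rightarrow A$ is precisely the augmentation $|\Res^{\EOp_+}_{\bullet}(\epsilon^* A)|\xrightarrow{\sim}\epsilon^* A$ of the previous paragraph, which is a weak-equivalence. Because the source is cofibrant and the target is fibrant, the characterization of Quillen equivalences forces the commutative augmentation $\epsilon: |\Res^{\ComOp_+}_{\bullet}(A)|\rightarrow A$ to be a weak-equivalence as well. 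Combined with the cofibrancy of $|\Res^{\ComOp_+}_{\bullet}(A)|$ noted above, this exhibits $|\Res^{\ComOp_+}_{\bullet}(A)|$ as a cofibrant resolution of $A$, which is the assertion of the theorem.

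I expect the only genuinely delicate points to be hypothesis-checking rather than new mathematics: first, verifying that the adjoint transpose of the commutative augmentation really is the Barratt-Eccles augmentation (a naturality statement with respect to the operad morphism $\epsilon$, underlying the relation $\epsilon_!\Res^{\EOp_+}_{\bullet}(-) = \Res^{\ComOp_+}_{\bullet}(-)$); and second, confirming that the characteristic-zero hypothesis — needed both for the model structure on ${}_{\ComOp_+}\dg\kk$ to exist and for $\epsilon_!\dashv\epsilon^*$ to be a Quillen equivalence — is the sole place where the restriction on $\kk$ enters. Once the identification $\epsilon_!|\Res^{\EOp_+}_{\bullet}(-)| = |\Res^{\ComOp_+}_{\bullet}(-)|$ and these compatibilities are in hand, the transfer along the Quillen equivalence is formal.
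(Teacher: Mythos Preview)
Your proposal is correct and follows essentially the same route as the paper: the paragraph preceding the theorem already records the identities $\epsilon_!\Res^{\EOp_+}_{\bullet}(A) = \Res^{\ComOp_+}_{\bullet}(A)$ and $\epsilon_!|\Res^{\EOp_+}_{\bullet}(A)| = |\Res^{\ComOp_+}_{\bullet}(A)|$ (via preservation of cosimplicial frames and coends by $\epsilon_!$) and declares the theorem a corollary of Theorem~\ref{UnitarySetting:MainResult} through the Quillen equivalence $\epsilon_!\dashv\epsilon^*$. You have simply made explicit the final step the paper leaves tacit, namely the use of the cofibrant-source/fibrant-target characterization of Quillen equivalences to pass from the weak-equivalence $|\Res^{\EOp_+}_{\bullet}(\epsilon^*A)|\xrightarrow{\sim}\epsilon^*A$ to its adjoint transpose $|\Res^{\ComOp_+}_{\bullet}(A)|\xrightarrow{\sim}A$.
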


In rational homotopy theory, it is more natural to deal with commutative algebras in non-negatively upper graded dg-modules (see~\cite{BousfieldGugenheim,Sullivan}).
In what follows, we rather use the notation $\dg^*\kk$ for the category of non-negatively upper graded dg-modules.
We similarly use the notation ${}_{\ComOp_+}\dg^*\kk$ for the category of commutative algebras in $\dg^*\kk$
and we call unitary commutative cochain dg-algebras the objects
of this category of commutative algebras $A\in{}_{\ComOp_+}\dg^*\kk$.
We still have a model structure on the category of cochain commutative dg-algebras ${}_{\ComOp_+}\dg^*\kk$
when the ground ring is a field of characteristic zero (see~\cite[\S 4]{BousfieldGugenheim} or \cite[\S II.6]{FresseBook}).

We have an obvious embedding of categories $\iota^*: {}_{\ComOp_+}\dg^*\kk\hookrightarrow{}_{\ComOp_+}\dg\kk$
which identifies any object of the category of unitary commutative cochain dg-algebras $A\in{}_{\ComOp_+}\dg^*\kk$
with a unitary commutative dg-algebra $A\in{}_{\ComOp_+}\dg\kk$ such that $A_n = 0$ for $n>0$
and $A_n = A^{-n}$ for $n\leq 0$.
We may check that this functor admits a left adjoint $\tau_{\sharp}^*: {}_{\ComOp_+}\dg\kk\rightarrow{}_{\ComOp_+}\dg^*\kk$
so that we have a Quillen adjunction $\tau_{\sharp}^*: {}_{\ComOp_+}\dg\kk\rightleftarrows{}_{\ComOp_+}\dg^*\kk :\iota$
between our model category of unitary commutative dg-algebras ${}_{\ComOp_+}\dg\kk$
and the model category of unitary commutative cochain dg-algebras ${}_{\ComOp_+}\dg^*\kk$.
We can use this Quillen adjunction to transport our result on the cotriple resolution of unitary commutative dg-algebras
to the category of unitary commutative cochain dg-algebras.
We actually need to restrict ourselves to the case of unitary commutative cochain dg-algebras such that $\DGH^0(A) = \kk$
in order to make this argument work.
We therefore need this extra assumption $\DGH^0(A) = \kk$
to get a counterpart of the statement of Theorem~\ref{CommutativeAlgebras:MainResult}
in the model category of unitary commutative cochain dg-algebras.

\section*{Outlook}

To complete this account, let us mention that our main results extend to the categories of dg-algebras over operads $\POp$
which are cofibrant as symmetric sequences (we also say that $\POp$ is $\Sigma$-cofibrant in this case).
This cofibration assumption just ensures that the category of dg-algebras associated to our operad $\POp$
is equipped with a valid (semi)model structure (see~\cite{OperadModules} for a general reference on this subject).
In fact, we can reduce our verifications to the case of operads $\ROp$ equipped with a coproduct $\Delta: \ROp\rightarrow\ROp\otimes\EOp$
which takes values in the arity-wise tensor product with the Barratt-Eccles operad $(\ROp\otimes\EOp)(r) = \ROp(r)\otimes\EOp(r)$, $r>0$,
and which lifts the morphism $\epsilon: \ROp\otimes\EOp\xrightarrow{\sim}\ROp$
induced by the augmentation morphism of the Barratt-Eccles operad $\epsilon: \EOp(r)\rightarrow\kk$
in each arity $r>0$. Indeed, to check the general result from this particular case,
we can use that an operad $\POp$ is weakly-equivalent to an operad $\ROp$
equipped with such a coproduct (see~\cite[\S 3.1]{BergerFresse})
and the existence of a weak-equivalence $\epsilon: \ROp\xrightarrow{\sim}\POp$ implies that we have a Quillen equivalence
at the level of the model categories of algebras associated to our operads.
In~\S\ref{Background}, we basically use that the Barratt-Eccles operad $\EOp$ is equipped with a coproduct
to define our cosimplicial framing functor on the category of $\EOp$-algebras.
Now, we can use the coproduct operation $\Delta: \ROp\rightarrow\ROp\otimes\EOp$ to extend the construction
of this cosimplicial framing to $\ROp$-algebras in dg-modules
and we readily check that the rest of our argument lines
remains valid in this context.

\bibliographystyle{plain}
\bibliography{CotripleResolution}

\end{document}